\theoremstyle{plain}
\newtheorem{theorem}{Theorem}[section]
\newtheorem{proposition}[theorem]{Proposition}
\newtheorem{lemma}[theorem]{Lemma}
\theoremstyle{remark}
\numberwithin{equation}{section}
\newcommand{\C}{\mathbb{C}}
\newcommand{\R}{\mathbb{R}}
\def\({\left(}
\def\){\right)}
\def\<{\left\langle}
\def\>{\right\rangle}
\def\le{\leqslant}
\def\ge{\geqslant}
\newcommand{\rre}{\mathbb{R}}
\newcommand{\pt}{\partial}
\begin{document}
\title[Schr\"{o}dinger equation with higher order dispersion]
{Asymptotic behavior in time of solution\\
to the nonlinear Schr\"{o}dinger equation\\
with higher order anisotropic dispersion}

\author[J.-C. Saut]{Jean-Claude Saut}

\address{Laboratoire de Math\' ematiques, CNRS and Universit\' e Paris-Sud\\
91405 Orsay, France}
\email{jean-claude.saut@u-psud.fr}
\author[J.Segata]
{Jun-ichi Segata}

\address{Mathematical Institute, Tohoku University\\
6-3, Aoba, Aramaki, Aoba-ku, Sendai 980-8578, Japan}
\email{segata@m.tohoku.ac.jp}

\subjclass[2000]{Primary 35Q55
; Secondary 35B40}

\keywords{Schr\"{o}dinger equation with higher order dispersion,  
scattering problem}


\maketitle

\begin{abstract}
We consider the asymptotic behavior in time of solutions 
to the nonlinear Schr\"{o}dinger equation with fourth order anisotropic 
dispersion (4NLS) which describes the propagation
of ultrashort laser  pulses  in a 
medium  with  anomalous  time-dispersion in  the  presence  of
fourth-order time-dispersion. We prove existence of 
a solution to (4NLS)
which scatters to a solution of the linearized equation 
of (4NLS) as $t\to\infty$.

\end{abstract}

\section{Introduction}

We consider the asymptotic behavior in time of solution 
to the nonlinear Schr\"{o}dinger equation with higher order anisotropic 
dispersion:  
\begin{eqnarray}
i\pt_tu+\alpha\Delta u+i\beta\pt_{x_{1}}^{3}u+\gamma\pt_{x_{1}}^4u=
\lambda|u|^{p-1}u,
\quad t>0,x\in\rre^{d},
\label{D}
\end{eqnarray}
where $u:\rre\times\rre^{d}\to\C$ is an unknown function, 
$\alpha,\beta,\gamma,\lambda$ 
are real constants and $p>1$. Equation (\ref{D}) 
arises in nonlinear optics to model the propagation
of ultrashort laser  pulses  in a 
medium  with  anomalous  time-dispersion  in  the  presence  of
fourth-order time-dispersion   (see \cite {WF, FIS, Ber} 
and the references therein). It also arises  in models of propagation
 in fiber arrays (see \cite{ADRT, FIS1}).

To simplify (\ref{D}), we introduce a new unknown function 
\begin{eqnarray*}
v(t,x)=e^{-i(\frac{\alpha\beta^{2}}{16\gamma^{2}}
+\frac{5\beta^{4}}{256\gamma^{3}})t+i\frac{\beta}{4\gamma}x_{1}}
u(t,x_{1}-(\frac{\alpha\beta}{2\gamma}
+\frac{\beta^{3}}{8\gamma^{2}})t,x_{\perp}),
\end{eqnarray*}
where $x=(x_{1},x_{\perp})\in\rre\times\rre^{d-1}$. 
Then the equation (\ref{D}) can be rewritten as 
\begin{eqnarray*}
i\pt_tv+
\{(\alpha+\frac{3\beta^{2}}{8\gamma})\pt_{x_{1}}^2
+\alpha\Delta_{\perp}\}v+\gamma\pt_{x_{1}}^4v=\lambda|v|^{p-1}v,
\qquad t>0,x\in\rre^{d}.
\end{eqnarray*}
Therefore if $\alpha,\beta$ and $\gamma$ satisfy $(\alpha+\frac{3\beta^{2}}{8\gamma})\alpha>0$ 
and $\alpha\gamma<0$, then by using a suitable scaling transform, we can rewrite (\ref{D}) 
into the Schr\"{o}dinger equation 
with fourth order anisotropic dispersion:
\begin{eqnarray}
i\pt_tu+\frac12\Delta u-\frac{1}{4}\pt_{x_{1}}^4u=\lambda|u|^{p-1}u,
\qquad t>0,x\in\rre^{d}.
\label{4NLS}
\end{eqnarray}
In this paper, we study the asymptotic behavior in time of solutions 
to (\ref{4NLS}).

The Cauchy problem for the  homogeneous fourth order nonlinear Schr\"odinger type equation 
\begin{eqnarray}
i\pt_tu+\frac12\Delta^{2} u=\lambda|u|^{p-1}u,
\qquad t>0,x\in\rre^{d}
\label{h}
\end{eqnarray}
has been  studied by many authors,  most of the results holding also when  lower dispersive terms are added.  By using the Strichartz estimates in \cite{BKS} one shows  that the 
Cauchy problem is locally well-posed in the energy space $H^2(\R^d)$ for the energy subcritical case (i.e., 
$1<p<1+8/(d-4)$ when $d\geq 5$ and $1<p<\infty$ when $d\leq 4$) and in $L^2(\R^d)$ for the mass subcritical case ($
1<p<1+8/d$). We also  refer  to Bouchel \cite{B} who studies the Cauchy problem and furthermore  gives non-existence, existence and qualitative properties results of solitary wave solutions for (1.1). See also \cite{FIS} for results on the Cauchy problem for slightly more general situations.

There are 
several results concerning the scattering and blow-up of solutions 
for (\ref{h}).
For the defocusing case $\lambda<0$, 
the global well-posedness and scattering for (\ref{h}) 
with the energy-critical nonlinearity (i.e., (\ref{h}) 
with $d\ge5,$ and $p=1+8/(d-4)$) was studied by 
Pausader \cite{P1} for radially symmetric initial data  
by combining the concentration-compactness argument by Kenig-Merle \cite{KM} 
and Morawetz-type estimate. 
Later, Miao, Xu and Zhao \cite{MXZ2} 
proved a similar result for (\ref{h}) 
in the energy-critical and higher dimensional case 
$d\ge9$ without radial 
assumption on initial data. In \cite{P2}, Pausader 
has shown the global well-posedness and scattering of  
(\ref{h}) with cubic nonlinearity for the case $5\le d\le8$. 
Pausader and Xia \cite{PX} proved the global well-posedness 
and scattering for (\ref{h}) 
with  mass super-critical nonlinearity (i.e., (\ref{h}) 
with $p>1+8/d$) for low dimensions $1\le d\le 4$ 
by using a virial-type estimate instead of 
the Morawetz-type estimates. 

For the focusing case $\lambda>0$, 
Pausader \cite{P0} and Miao, Xu and Zhao \cite{MXZ1} 
independently showed the global well-posedness and scattering for (\ref{h}) 
with the energy-critical nonlinearity for  radially 
symmetric initial data with $\dot{H}^2$ and energy norms below  that of the ground state. 
When the initial data is sufficiently small, 
Hayashi, Mendez-Navarro and Naumkin \cite{HN7} 
proved the global existence and the scattering 
for (\ref{h}) with $d=1$ and $p>5$ by using the factorization technique 
developed by the authors \cite{HN3}. In \cite{HN7} 
they  also shown the small data global existence and 
the decay estimates for (\ref{h}) with $d=1$ and $p>4$ 
under the assumption that the initial data is odd. 
In the subsequent paper \cite{HN8}, 
they proved that when $d=1,p=5$ and $\lambda<0$, 
a solution to (\ref{h}) has dissipative structure 
and gains additional logarithmic decay. 
Aoki, Hayashi and Naumkin \cite{AHN} 
showed the global existence and scattering 
of (\ref{h}) with $d=1,2$ and $p>1+4/d$. 
We refer  also to the series of paper by Hayashi and Naumkin 
\cite{HN4,HN5,HN6} and work by Hirayama and Okamoto \cite{HO} 
for interesting phenomena on 
the long time behavior of solutions to (\ref{h}) 
with a derivative nonlinearity. 

Recently, a blow-up result is proved by Boulenger and Lenzmann \cite{BL} 
for (\ref{h}) with the mass critical and super critical 
focusing nonlinearity in the radial case which solves a 
long standing conjecture suggested by several numerical studies 
(see \cite{FIS1} for instance). Notice that most of their results hold also 
when lower dispersive term $\mu\Delta u$ is added. 
See also the work by Bonheure, Casteras, Gou and Jeanjean 
\cite{BCGJ} for the extension of the blow up results by \cite{BL}.

We now return to the inhomogeneous case. 
Since the point-wise decay of the solution 
to the linear fourth order Schr\"{o}dinger equation
\begin{eqnarray}
i\pt_tu+\frac12\Delta u-\frac{1}{4}\pt_{x_{1}}^4u=0,
\qquad t>0,x\in\rre^{d}
\label{4L}
\end{eqnarray}
is 
${\mathcal O}(t^{-d/2})$ as $t\to\infty$ (see Ben-Artzi, Koch and Saut 
\cite{BKS}), 
we expect that if $p>1+2/d$, then the (small) solution 
to (\ref{4NLS}) will scatter to the solution to the linearized equation (\ref{4L}). 
Compared to the homogeneous equation (\ref{h}), 
there are few results on the long time behavior of 
solution for (\ref{4NLS}).   
For the one dimensional cubic case, Segata \cite{S} proved that 
for a given asymptotic profile, there exists a solution $u$ to (\ref{4NLS}) 
which converges to the given asymptotic profile as $t\to\infty$, 
where the asymptotic profile is given by the leading term of the solution 
to the linear equation (\ref{4L}) with a logarithmic phase correction. 
Furthermore, Hayashi and Naumkin \cite{HN9} proved that for any small 
initial data, there exists a global solution to (\ref{4NLS}) with $d=1,p=3$ 
which behaves like a solution to the linear equation (\ref{4L}) 
with a logarithmic phase correction.
 
In this paper, we consider the small data scettering of (\ref{4NLS}) 
for the higher dimensional case. Notice that the interesting case 
is for $p\in(1+2/d,1+4/d)$ since for the case $p\ge1+4/d$, the nonlinearity 
in (\ref{4NLS}) is weaker compared to the case $p<1+4/d$. 
We shall show that for $d=2,3$ and some range of $p$ in $(1+2/d,1+4/d)$, 
there exists a solution to (\ref{4NLS}) which scatters to the solution to the 
linearized equation (\ref{4L}). Let us consider the final state problem:
\begin{eqnarray}
\left\{
\begin{array}{l}
\displaystyle{
i\pt_tu+\frac12\Delta u-\frac{1}{4}\pt_{x_{1}}^4u=\lambda|u|^{p-1}u,
\qquad t>0,x\in\rre^{d},}\\
\displaystyle{\lim_{t\to+\infty}(u(t)-W(t)\psi_{+})=0,\qquad in\ L^2,}
\end{array}
\right.
\label{FSP}
\end{eqnarray}
where $\{W(t)\}_{t\in\rre}$ 
is a unitary group generated by the operator $(1/2)i\Delta-(1/4)i\pt_{x_{1}}^4$, 
and $\psi_+$ is a given function.

Our main results in this paper are as follows:

\begin{theorem}\label{nonlinear1}
Let $d=2$ and let $2<p<3$. 
Then 
for any $\psi_+\in H^{0,s}(\R^2)$ 
with $(p+1+2\varepsilon)/2<s<p$  
and sufficiently small number $\varepsilon>0$, 
there exists a unique global solution $u\in C(\rre;L_x^2(\rre^{2}))
\cap \langle\pt_{x_{1}}\rangle^{-1/q}L_{loc}^{q}(\rre;L_x^{r}(\rre^{2}))$ 
to (\ref{FSP}) satisfying 
\begin{eqnarray*}
\|u-W(t)\psi_+\|_{L^{\infty}(t,\infty;L_x^2)}
\le Ct^{-\gamma}
\end{eqnarray*}
for $t\ge3$, where $\gamma=\min\{1/(p-1)-1/2,2-p,3/8\}$, 
and 
\begin{eqnarray*}
\left(q,r\right)
=\left(\frac{2}{p-2+2\varepsilon},\frac{2}{3-p-2\varepsilon}\right). 
\end{eqnarray*}
\end{theorem}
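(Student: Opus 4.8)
The plan is to realize the solution as a fixed point of the Duhamel operator associated with the final condition. Writing $W(t)=e^{it(\frac12\Delta-\frac14\pt_{x_1}^4)}$ and integrating (\ref{4NLS}) from $t=+\infty$, the problem (\ref{FSP}) is equivalent to the integral equation
\begin{equation*}
u(t)=W(t)\psi_++i\lambda\int_t^\infty W(t-\tau)|u(\tau)|^{p-1}u(\tau)\,d\tau=:\Phi(u)(t),
\end{equation*}
since any fixed point automatically satisfies $u(t)-W(t)\psi_+\to0$ in $L^2$. I would first construct $u$ only on a half-line $[T,\infty)$ with $T$ large, and then propagate it to all of $\rre$ at the end. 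The point of working on $[T,\infty)$ is that I do not assume $\psi_+$ small: the required smallness will instead come for free from the time decay of the free evolution, i.e.\ from the fact that the relevant space-time norms of $W(t)\psi_+$ over $[T,\infty)$ tend to $0$ as $T\to\infty$.

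For the fixed-point space I would take
\begin{equation*}
X_{T,M}=\Big\{u\in C([T,\infty);L_x^2):\ \|u\|_{X}\le M\Big\},
\end{equation*}
where $\|u\|_X$ combines three quantities: the weighted $L^2$-decay $\sup_{t\ge T}t^{\gamma}\|u(t)-W(t)\psi_+\|_{L^2}$, the anisotropic Strichartz norm $\|\langle\pt_{x_1}\rangle^{1/q}u\|_{L^q([T,\infty);L_x^r)}$, and an auxiliary pointwise decay norm $\sup_{t\ge T}t^{1-1/p}\|u(t)\|_{L^{2p}}$ matching the free dispersive rate. Here the pair $(q,r)$ is the one in the statement, and one checks $1/q+1/r=1/2$, which is the admissibility relation for the anisotropic Strichartz estimates of $W$ in \cite{BKS}, the $\langle\pt_{x_1}\rangle^{1/q}$ gain recording the extra $x_1$-smoothing furnished by the quartic dispersion. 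The metric would be the $L^\infty_tL^2$-distance, so that $X_{T,M}$ is complete.

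The estimate of the free term uses the pointwise decay $\mathcal{O}(t^{-d/2})=O(t^{-1})$ of \cite{BKS}: by interpolation with $L^2$-conservation this yields $\|W(t)f\|_{L^r}\lesssim t^{-d(1/2-1/r)}\|f\|_{L^{r'}}$, and the hypothesis $\psi_+\in H^{0,s}(\rre^2)$ with $s>(p+1+2\varepsilon)/2$ gives, via H\"older, the embedding $H^{0,s}\hookrightarrow L^{r'}$ needed to convert the weighted data into the dispersive rate; the Strichartz contribution is then immediate. For the nonlinear term the basic mechanism is the short-range computation in $L^2$,
\begin{equation*}
\Big\|\int_t^\infty W(t-\tau)|u|^{p-1}u\,d\tau\Big\|_{L^2}\le\int_t^\infty\|u(\tau)\|_{L^{2p}}^p\,d\tau\lesssim M^p\int_t^\infty\tau^{-\frac d2(p-1)}\,d\tau\sim M^p\,t^{-(p-2)},
\end{equation*}
which converges precisely because $p>1+2/d=2$, and produces one of the three competing rates; the rate $1/(p-1)-1/2$ arises from closing the auxiliary $L^{2p}$-decay self-consistently inside the Duhamel integral, while the cap $3/8$ is the best rate available through the anisotropic smoothing estimate, whence $\gamma=\min\{1/(p-1)-1/2,\,p-2,\,3/8\}$. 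The $\Phi$-bound in the Strichartz norm is obtained from the inhomogeneous Strichartz estimate, the derivative $\langle\pt_{x_1}\rangle^{1/q}$ being distributed over $|u|^{p-1}u$ by a fractional Leibniz rule. Since $|u|^{p-1}u-|v|^{p-1}v=O\big((|u|^{p-1}+|v|^{p-1})|u-v|\big)$, all these bounds are Lipschitz, so taking $M$ small and $T$ large turns $\Phi$ into a contraction on $X_{T,M}$ and yields the unique solution on $[T,\infty)$. Finally, as (\ref{4NLS}) conserves the $L^2$ mass and $2<p<3$ is mass-subcritical ($p<1+8/d=5$), the $L^2$ local well-posedness of \cite{BKS} extends $u$ to $C(\rre;L_x^2)\cap\langle\pt_{x_1}\rangle^{-1/q}L^q_{loc}(\rre;L_x^r)$.

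The step I expect to be the main obstacle is the Strichartz control of the Duhamel term, namely distributing the fractional directional derivative $\langle\pt_{x_1}\rangle^{1/q}$ across the non-smooth nonlinearity $|u|^{p-1}u$ (with $p$ not an odd integer) in the anisotropic Lebesgue setting, while preserving exactly enough time integrability for $\int_t^\infty$ to converge at the claimed rate. Because $p$ lies barely above the short-range threshold $1+2/d$, the $\tau$-integrals are nearly divergent, and it is precisely the weight $s$ on $\psi_+$ together with the small parameter $\varepsilon$ built into $(q,r)$ that supply the extra decay needed to close the contraction.
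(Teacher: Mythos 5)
Your proposal takes a genuinely different route from the paper --- direct Duhamel iteration at the free profile $W(t)\psi_+$ --- and that route has a quantitative gap exactly where the theorem is hardest. Your scheme must reproduce the auxiliary norm $\sup_{t\ge T}t^{1-1/p}\|u(t)\|_{L^{2p}}$ under the Duhamel map, and this fails on most of the range $2<p<3$. Indeed, by the dispersive estimate and interpolation between $L^2$ and $L^{2p}$ (note $\|u\|_{L^2}\simeq\|\psi_+\|_{L^2}$ is of order one, not decaying, so no smallness is available there),
\begin{equation*}
\Bigl\|\int_t^\infty W(t-\tau)|u|^{p-1}u\,d\tau\Bigr\|_{L^{2p}}
\lesssim\int_t^\infty(\tau-t)^{-(1-\frac1p)}\|u(\tau)\|_{L^{2}}\,\|u(\tau)\|_{L^{2p}}^{p-1}\,d\tau
\lesssim M^p\int_t^\infty(\tau-t)^{-(1-\frac1p)}\tau^{-\frac{(p-1)^2}{p}}d\tau\sim M^p\,t^{-(p-2)},
\end{equation*}
whereas membership in your space requires the rate $t^{-(1-1/p)}$. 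One has $p-2\ge 1-1/p$ iff $p^2-3p+1\ge0$, i.e.\ $p\ge(3+\sqrt5)/2\approx2.618$; for $2<p<(3+\sqrt5)/2$ the quantity $\sup_{t\ge T}t^{1-1/p}\cdot t^{-(p-2)}$ is infinite, so $\Phi$ does not even map your space into itself, and no choice of $T$, $M$, or the $\varepsilon$ in $(q,r)$ repairs this --- $\varepsilon$ only serves to avoid the failing $d=2$ endpoint Strichartz pair, it generates no time decay. This is the classical obstruction below the Strauss-type exponent for $p$ just above the short-range threshold $1+2/d$, and it is precisely what your closing paragraph waves at without resolving.

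The paper circumvents this by never demanding decay of the unknown $u$ in $L^{2p}$: following Ozawa and Hayashi--Naumkin, it introduces the explicit modified profile $u_+$ of (\ref{u}), built from $\hat\psi_+$ evaluated at the stationary point $\mu$ of (\ref{mu}) with the phase correction $S_+$, chosen so that $i\partial_t w$, $w=\hat\psi_+e^{iS_+}$, cancels the profile's self-interaction $\lambda|u_+|^{p-1}u_+$ up to remainders $R_1,R_2$ controlled by the linear asymptotic expansion of Proposition \ref{linear} (decay $t^{-\beta}$ for any $3/8<\beta<1/2$, which is the source of the cap $3/8$ in $\gamma$) together with the $H^s$ bounds of Proposition \ref{nl1} for $\hat\psi_+e^{iS_+}$, which is where the hypotheses $s<p$ (fractional chain rule on the non-smooth phase) and $s>(p+1+2\varepsilon)/2>d/2$ actually enter --- your H\"older embedding $H^{0,s}\hookrightarrow L^{r'}$ is not what $s$ is doing in the theorem. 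The contraction then runs on $v=u-u_+$, and the difference bound $\bigl||u|^{p-1}u-|u_+|^{p-1}u_+\bigr|\lesssim(|v|^{p-1}+|u_+|^{p-1})|v|$ requires only the \emph{explicit} pointwise decay $\|u_+\|_{L^\infty}\lesssim t^{-1}$, never decay of $u$. Two further points: the step you flagged as the main obstacle, distributing $\langle\partial_{x_1}\rangle^{1/q}$ over $|u|^{p-1}u$, never arises in the paper, since the anisotropic derivative gain is internal to the retarded Strichartz estimate (Lemma \ref{S}), whose dual side carries $\langle\partial_{x_1}\rangle^{-2/(q_2d)}$, so the nonlinearity is estimated in plain dual Lebesgue norms, cf.\ (\ref{821}); and your middle rate $p-2$ is in fact the correct one --- the statement's $2-p$ is a sign slip, as the proof's bound $\|S_+\|_{L^\infty}\|u_+\|_{L^2}\lesssim t^{1-(p-1)}$ shows.
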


\begin{theorem}\label{nonlinear2}
Let $d=3$ and let $9/5<p<7/3$. 
Then 
for any $\psi_+\in H^{0,s}(\R^3)$ 
with $p-1/6<s<p$, 
there exists a unique global solution $u\in C(\rre;L_x^2(\rre^{3}))
\cap \langle\pt_{x_{1}}\rangle^{-2/(3q)}L_{loc}^{q}(\rre;L_x^{r}(\rre^{3}))$ 
to (\ref{FSP}) satisfying 
\begin{eqnarray*}
\|u-W(t)\psi_+\|_{L^{\infty}(t,\infty;L_x^2)}
\le Ct^{-\gamma}
\end{eqnarray*}
for $t\ge3$, where 
$\gamma=\min\{1/(p-1)-3/4,(5-3p)/2,3/8\}$,  
and 
\begin{eqnarray*}
\left(q,r\right)
=\left(\frac{4}{3p-5},\frac{6}{8-3p}\right). 
\end{eqnarray*}
\end{theorem}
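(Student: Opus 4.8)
The plan is to solve the final state problem $(\ref{FSP})$ by a fixed-point argument on the Duhamel integral equation posed at $t=+\infty$.  Writing the solution in the integral form
\begin{eqnarray*}
u(t)=W(t)\psi_+-i\lambda\int_t^{\infty}W(t-\tau)|u(\tau)|^{p-1}u(\tau)\,d\tau,
\end{eqnarray*}
I would set up the map $\Phi(u)$ equal to the right-hand side and look for a fixed point in a complete metric space built from the function spaces appearing in the statement, namely $C(\rre;L_x^2)\cap\langle\pt_{x_1}\rangle^{-\theta}L^q_{\mathrm{loc}}(\rre;L^r_x)$ with the appropriate $\theta$ (equal to $1/q$ when $d=2$ and $2/(3q)$ when $d=3$) and the exponents $(q,r)$ specified in each theorem.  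The metric would be weighted by a power of $t$ so that the contraction estimate also yields the stated decay rate $Ct^{-\gamma}$ for $\|u-W(t)\psi_+\|_{L^\infty(t,\infty;L^2_x)}$.  The three quantities inside the $\min$ defining $\gamma$ should correspond exactly to three different sources of decay: the linear dispersive decay rate of $W(t)$, the integrability gained from the nonlinear term near $t=\infty$, and a technical ceiling of $3/8$ coming from the anisotropic (only $\partial_{x_1}$) fourth-order smoothing.

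The key analytic ingredients will be the Strichartz and decay estimates for the linear propagator $W(t)$ associated with $(\ref{4L})$.  Since the linear solution decays like ${\mathcal O}(t^{-d/2})$ pointwise by Ben-Artzi, Koch and Saut \cite{BKS}, I expect the excerpt to have recorded (in a preceding lemma) the corresponding $L^{r'}\to L^r$ time-decay estimate together with a set of admissible Strichartz pairs $(q,r)$ adapted to the mixed second/fourth-order symbol, including the fractional derivative weight $\langle\pt_{x_1}\rangle^{-\theta}$ that reflects the gain of $x_1$-regularity from the fourth-order term.  The plan is to estimate $\Phi(u)$ in these norms: the homogeneous part $W(t)\psi_+$ is controlled by the Strichartz estimate applied to data $\psi_+\in H^{0,s}$ (the weight $s<p$ ensuring the nonlinearity is well-defined and $s>(p+1+2\varepsilon)/2$, resp. $s>p-1/6$, ensuring enough decay of the data through the dual relation between $H^{0,s}$ weighted spaces and $W(t)$), while the Duhamel part is handled by the inhomogeneous Strichartz estimate followed by a Hölder estimate on $|u|^{p-1}u$ in the $L^q_tL^r_x$ norm.

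The main obstacle will be closing the time-integral near $t=+\infty$: because $p<1+4/d$ lies below the short-range (Strauss) threshold $1+4/d$ that the fourth-order dispersion provides, the naive $L^1$-in-time bound on the nonlinearity diverges, and one must exploit the weighted $H^{0,s}$ regularity of $\psi_+$ to trade spatial decay for the missing temporal integrability.  Concretely, I would commute the weight $|x|^s$ (or the operator $\langle x\rangle^s$) through $W(t)$ using the operator identity $W(t)\,x\,W(-t)=x+(\text{polynomial in }t\,\nabla)$ coming from the Galilei-type structure of the mixed symbol, picking up extra powers of $t$ that convert the weighted data norm into decay.  The restrictions $d=2,3$, the narrow ranges $2<p<3$ and $9/5<p<7/3$, and the technical $\varepsilon$ are precisely what make the bookkeeping of these competing powers of $t$ consistent; verifying that all the Hölder exponents and the admissibility conditions for $(q,r)$ simultaneously hold—so that the contraction constant is finite and the decay exponent equals $\gamma$—is the delicate computational heart of the argument, but it is routine once the weighted Strichartz framework is in place.
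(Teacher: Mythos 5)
Your outline misses the paper's central device, and without it the fixed point you propose does not close on the stated range of $p$. You center the contraction at the free wave $W(t)\psi_+$, so your Duhamel map contains the uncancelled source term $\int_t^{\infty}W(t-\tau)\,|W(\tau)\psi_+|^{p-1}W(\tau)\psi_+\,d\tau$. Since the $L^\infty_x$ decay $\|W(\tau)\psi_+\|_{L^\infty}\lesssim \tau^{-3/2}$ is sharp, this source decays in $L^2_x$ only like $t^{1-\frac32(p-1)}$, which in a metric weighted by $t^{\alpha}$ forces $\alpha\le\frac32(p-1)-1$. On the other hand, the contraction estimate for the $|v|^{p-1}v$ piece via the Strichartz inequality (Lemma \ref{S}) forces $\alpha>\frac{1}{p-1}-\frac34$ (this is exactly the constraint that, together with $\alpha<\beta<\frac12$, produces the threshold $p>9/5$ in the theorem). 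Setting $x=p-1$, compatibility of your two constraints requires $6x^2-x-4>0$, i.e.\ $p>\frac{13+\sqrt{97}}{12}\approx 1.904$, so for $9/5<p\lesssim 1.904$ your map is not a self-map of any ball with the required decay. The paper avoids this by introducing the modified profile $u_+$ of (\ref{u}): the stationary-phase leading term of $W(t)\psi_+$ multiplied by $e^{iS_+}$, with $S_+$ chosen so that $i\partial_t$ of the profile reproduces $\lambda|u_+|^{p-1}u_+$ exactly, up to remainders $R_1,R_2$ of size $t^{-\beta}$ for any $\beta<\frac12$ \emph{uniformly in $p$} (Proposition \ref{linear} combined with Proposition \ref{nl1}); the contraction is run on the reformulated equation (\ref{81}) around $u_+$, and only a posteriori is $u_+$ compared with $W(t)\psi_+$, using $S_+\to0$ since $p>1+2/d$ --- this comparison is where the rate $t^{1-\frac32(p-1)}$ enters $\gamma$, and the three-term $\min$ in $\gamma$ is precisely the trace of the decomposition $u-u_+$, $u_+-u_+^0$, $u_+^0-W(t)\psi_+$, which your unmodified scheme cannot reproduce.

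Your proposed substitute --- commuting $\langle x\rangle^s$ through $W(t)$ via a Galilei-type identity --- does not rescue the argument. For the anisotropic symbol $\frac12|\xi|^2+\frac14\xi_1^4$ the conjugated operator in the $x_1$ direction is $x_1+t\,(D_1+D_1^3)$, so using it costs three derivatives on the solution, which are unavailable in the $L^2$-level spaces of the statement; and it gives no improvement on the sharp decay of the source above, which is an oscillation (normal-form) phenomenon in $\tau$, not a weight phenomenon. The paper instead avoids vector fields entirely: Proposition \ref{linear} is proved by a direct stationary-phase expansion (via Lemma \ref{lem1}) with remainder $O(t^{-d(\frac12-\frac1p)-\beta})$ for $\frac38<\beta<\frac12$ --- this is the true origin of the $3/8$ in $\gamma$, not an ``anisotropic fourth-order smoothing ceiling'' --- and the hypothesis $p-\frac16<s<p$ arises from the requirement $s>\frac{d}{2}-\frac{d-1}{r}+1$ of Proposition \ref{linear} with $r=\frac{6}{8-3p}$ (which evaluates to $s>p-\frac16$) together with the fractional chain rule of Lemma \ref{ff}, which limits the regularity of $|\hat\psi_+|^{p-1}e^{iS_+}$ to $s<p$ in Lemma \ref{nl}. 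None of these mechanisms, nor the phase correction they serve, appear in your outline, so the ``delicate computational heart'' you defer would in fact fail near $p=9/5$ rather than being routine.
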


To prove Theorems \ref{nonlinear1} and \ref{nonlinear2}, 
we employ the argument by Ozawa \cite{O}, Hayashi and Naumkin \cite{HN3,HN1}. 
Let us introduce a modified asymptotic profile 
\begin{equation}\label{u}
\begin{aligned}
\quad u_+(t,x)&=
\frac{t^{-\frac{d}{2}}}{\sqrt{3\mu_{1}^{2}+1}}\hat{\psi}_{+}(\mu)
e^{\frac34it\mu_{1}^{4}+\frac12it|\mu|^{2}
+iS_{+}(t,\mu)-i\frac{d}{4}\pi},\\
S_+(t,\xi)&=-\frac{\lambda}{1-\frac{p-1}{2}d}
\frac{|\hat{\psi}_{+}(\xi)|^{p-1}}
{(3\xi_{1}^{2}+1)^{\frac{p-1}{2}}}t^{1-\frac{p-1}{2}d},
\end{aligned}
\end{equation}
where $\mu=(\mu_{1},\mu_{\perp})$ is given by 
\begin{equation}\label{mu}
\begin{aligned}
\mu_{1}&=
\left\{\frac1{2t}\left(x+\sqrt{x^{2}+\frac{4}{27}t^{2}}\right)
\right\}^{1/3}
+
\left\{\frac1{2t}\left(x-\sqrt{x^{2}+\frac{4}{27}t^{2}}\right)
\right\}^{1/3},\\
\mu_{\perp}&=\frac{x_{\perp}}{t}.
\end{aligned}
\end{equation}
Notice that the right hand side of (\ref{u}) with $S_+\equiv0$ is the 
leading term of the solution to the linear fourth order equation (\ref{4L}).  
We first construct a solution $u$ to the final state problem 
\begin{eqnarray}
\left\{
\begin{array}{l}
\displaystyle{
i\pt_tu+\frac12\Delta u-\frac{1}{4}\pt_{x_{1}}^4u=\lambda|u|^{p-1}u,
\qquad t>0,x\in\rre^{d},}\\
\displaystyle{\lim_{t\to+\infty}(u(t)-u_{+})=0,\qquad in\ L^2.}
\end{array}
\right.
\label{FSP2}
\end{eqnarray}
To prove this, 
we first rewrite (\ref{FSP2}) as the integral equation 
\begin{eqnarray}
u(t)-u_+(t)&=&
i\lambda\int_t^{+\infty}
W(t-\tau)[|u|^{p-1}u-|u_+|^{p-1}u_+](\tau)d\tau\nonumber\\
& &+R_1(t)-i\int_t^{+\infty}W(t-\tau)R_2(\tau)d\tau,
\label{MINT}
\end{eqnarray}
where
\begin{eqnarray*}
R_1(t,x)
&=&W(t){{\mathcal F}}^{-1}
[\hat{\psi}_+(\xi)e^{iS(t,\xi)}](x)
-u_+(t,x),\\
R_2(t,x)
&=&W(t){{\mathcal F}}^{-1}[
\lambda
\frac{t^{-\frac{p-1}{2}d}}
{(3\xi_{1}^{2}+1)^{\frac{p-1}{2}}}|\hat{\psi}_+|^{p-1}
\hat{\psi}_+(\xi)e^{iS(t,\xi)}](x)\\
& &-\lambda|u_+|^{p-1}u_+(t,x).
\end{eqnarray*}
For the derivation of (\ref{MINT}), see Section 4 below. 
Next, we apply the contraction mapping principle to 
the integral equation (\ref{MINT})  
in a suitable function space. 
In this step the asymptotic formula (Proposition \ref{linear}) 
and the Strichartz estimate (Lemma \ref{S}) for 
the linear equation (\ref{4L}) play an important role. 
Finally, we show that the solutions of (\ref{FSP2}) 
converge to $W(t)\psi_{+}$ in $L^{2}$ as $t\to\infty$. 

We introduce several notations and function spaces 
which are used throughout this paper. 
For $\psi\in{{\mathcal S}}'(\rre^{d})$, 
$\hat{\psi}(\xi)={{\mathcal F}}[\psi](\xi)$ 
denote the Fourier transform of $\psi$. Let 
$\langle\xi\rangle=\sqrt{|\xi|^2+1}$. The differential operators
$|\nabla|^s=(-\Delta)^{s/2}$ and $\langle\nabla\rangle^s=(1-\Delta)^{s/2}$ denote 
the Riesz potential and Bessel potential of order $-s$, respectively. 
We define $\langle\pt_{x_{1}}\rangle^{s}={{\mathcal F}}^{-1}
\langle\xi_{1}\rangle^{s}{{\mathcal F}}^{-1}$. 
For $1\le q,r\le\infty$, $L^q(t,\infty;L_x^r(\rre^{d}))$ is defined 
as follows:
\begin{eqnarray*}
L^q(t,\infty;L_x^r(\rre^{d}))&=&\{u\in{{\mathcal S}}'(\rre^{1+d});
\|u\|_{L^q(t,\infty;L_x^r)}<\infty\},\\
\|u\|_{L^q(t,\infty;L_x^r)}&=&
\left(\int_t^{\infty}\|u(\tau)\|_{L_x^r}^qd\tau\right)^{1/q}.
\end{eqnarray*}
We will use the  Sobolev spaces
\begin{eqnarray*}
H^s(\R^d)=\lbrace \phi \in \mathcal S'(\R^d); 
\|\phi\|_{H^s}=\| 
\langle \nabla\rangle^{s} \phi\|_{L^2}<\infty\rbrace
\end{eqnarray*}
and their homogeneous version 
\begin{eqnarray*}
\dot{H}^s(\R^d)=\lbrace \phi \in \mathcal S'(\R^d); 
\|\phi\|_{\dot{H}^s}=\| 
|\nabla|^{s} \phi\|_{L^2}<\infty\rbrace.
\end{eqnarray*}
The weighted Sobolev spaces is defined by 
\begin{eqnarray*}
H^{m,s}(\R^d)=\lbrace \phi \in \mathcal S'(\R^d); 
\|\phi\|_{H^{m,s}}=\|\langle x\rangle^{s} 
\langle \nabla\rangle^{m} \phi\|_{L^2}<\infty\rbrace.
\end{eqnarray*}
 We denote 
various constants by $C$ and so forth. They may differ from 
line to line, when this does not cause any confusion. 

The plan of the present paper is as follows. 
In Section 2, we prove several linear estimates for the fourth order 
Schr\"{o}dinger type equation (\ref{4L}). In Section 3, we give 
several estimates for the asymptotic profile \ref{u}. 
In Section 4, we prove Theorems 
\ref{nonlinear1} and \ref{nonlinear2} by applying the contraction mapping principle to
the integral equation (\ref{MINT}). 
Finally in Section 5, we give several additional remarks.

\section{Linear Estimates} \label{sec:linear}

In this section, we derive several linear estimates for  
the fourth order Schr\"{o}dinger type equation
\begin{eqnarray}
\left\{
\begin{array}{l}
\displaystyle{
i\pt_tu+\frac12\Delta u-\frac{1}{4}\pt_{x_{1}}^4u=0,
\qquad t>0,x\in\rre^{d},
}\\
\displaystyle{u(0,x)=\psi(x),\qquad x\in\rre^{d}.}
\end{array}
\right.
\label{4LS}
\end{eqnarray}

\begin{proposition}\label{linear} 
Let $u$ be a solution to (\ref{4LS}). 
Then we have
\begin{eqnarray*}
u(t,x)=
\frac{t^{-\frac{d}{2}}}{\sqrt{3\mu_{1}^{2}+1}}\hat{\psi}(\mu)
e^{\frac34it\mu_{1}^{4}+\frac12it|\mu|^{2}
-i\frac{d}{4}\pi}+R(t,x)
\end{eqnarray*}
for $t\ge2$,
where $\mu=(\mu_{1},\mu_{\perp})$ is given by (\ref{mu}) 
and $R$ satisfies
\begin{eqnarray*}
\|\langle\pt_{x_{1}}\rangle^{1-\frac2p}R(t)\|_{L_x^{p}}
\le Ct^{-d(\frac12-\frac1p)-\beta}\|\psi\|_{H_{x}^{0,s}},
\end{eqnarray*}
for $2\le p\le\infty$, $3/8<\beta<1/2$ and $s>d/2-(d-1)/p+1$.
\end{proposition}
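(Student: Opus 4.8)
The plan is to realize the solution as an oscillatory integral, extract the leading term by the stationary phase method, and then obtain the remainder bound by interpolating between an $L^2$ and a weighted $L^\infty$ endpoint.

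By the Fourier transform the solution of (\ref{4LS}) is
\[
u(t,x)=W(t)\psi(x)=\frac{1}{(2\pi)^{d/2}}\int_{\rre^d}e^{i(x\cdot\xi-tP(\xi))}\hat\psi(\xi)\,d\xi,\qquad P(\xi)=\tfrac12|\xi|^2+\tfrac14\xi_1^4,
\]
so the phase is $\Phi(t,x,\xi)=x\cdot\xi-tP(\xi)$ and the stationary point solves $\nabla P(\mu)=x/t$, i.e. $\mu_1+\mu_1^3=x_1/t$ and $\mu_\perp=x_\perp/t$. The cubic equation for $\mu_1$ is solved by Cardano's formula, which is precisely (\ref{mu}). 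I would then check the three ingredients of the principal term: (i) the critical value is the Legendre transform $\Phi(t,x,\mu)=t(\nabla P(\mu)\cdot\mu-P(\mu))=\tfrac34t\mu_1^4+\tfrac12t|\mu|^2$; (ii) the Hessian in $\xi$ equals $-t\,\mathrm{diag}(1+3\mu_1^2,1,\dots,1)$, whose determinant has modulus $t^{d}(1+3\mu_1^2)$ and signature $-d$; hence (iii) the stationary phase formula returns exactly
\[
\frac{t^{-d/2}}{\sqrt{3\mu_1^2+1}}\,\hat\psi(\mu)\,e^{\frac34it\mu_1^4+\frac12it|\mu|^2-i\frac d4\pi},
\]
the stated leading term, and $R$ is the stationary phase remainder, for $t\ge2$.

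To estimate $R$ I would establish two endpoints and interpolate. First, the change of variables $x_1=t(\mu_1+\mu_1^3)$, $x_\perp=t\mu_\perp$ has Jacobian $dx=t^{d}(1+3\mu_1^2)\,d\mu$, so the principal term is an $L^2_x$-isometry of $\psi$. Factorizing $W(t)$ through this isometry reduces $\|R(t)\|_{L^2}$ to the $L^2$-norm of $(e^{i\Psi}-1)$ applied to the data, where $\Psi$ collects the super-quadratic part of the phase; using the elementary bound $|e^{i\theta}-1|\le C|\theta|^{a}$ with $0<a<1$ and spending the weight $\langle x\rangle^{s}$ on $\Psi$ gives $\|R(t)\|_{L^2}\le Ct^{-\beta}\|\psi\|_{H^{0,s}}$ for $\beta<1/2$ and $s>3/2$ (the case $p=2$, where $1-2/p=0$ and $d(\tfrac12-\tfrac1p)=0$). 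At the opposite end, a one-step stationary phase expansion — integrating by parts away from $\mu$ and Taylor expanding the amplitude $\hat\psi$ to first order near $\mu$ — yields $\|\langle\pt_{x_1}\rangle R(t)\|_{L^\infty}\le Ct^{-d/2-\beta}\|\psi\|_{H^{0,s}}$ with $s>d/2+1$ (the case $p=\infty$). Complex interpolation between $(L^2,\langle\pt_{x_1}\rangle^{0})$ and $(L^\infty,\langle\pt_{x_1}\rangle^{1})$ with parameter $\theta=1-2/p$ then produces the operator $\langle\pt_{x_1}\rangle^{1-2/p}$, the decay $t^{-d(1/2-1/p)-\beta}$, and the threshold $s>\tfrac{d}{2}-\tfrac{d-1}{p}+1$, exactly as claimed.

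The main obstacle is the uniform-in-$x$ control of the remainder in the weighted $L^\infty$ endpoint. As $|x_1/t|\to\infty$ one has $\mu_1\to\infty$, so both the $\xi_1$-curvature $1+3\mu_1^2$ and the cubic correction $\mu_1(\xi_1-\mu_1)^3$ in $\Psi$ grow, and these must be balanced so that the $t^{-1/2}$ gain of a single stationary phase step survives uniformly; the factor $\sim\langle\xi_1\rangle$ inserted by $\langle\pt_{x_1}\rangle$ interacts with this anisotropic degeneracy and is what forces the regularity threshold, the appearance of $d-1$ in place of $d$ reflecting that the $\xi_1$-direction is treated separately because of the extra quartic curvature and the $\langle\pt_{x_1}\rangle$-weight. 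Since $\hat\psi$ is only assumed to have finitely many derivatives (the $H^{0,s}$ hypothesis), only one term of the asymptotic expansion can be extracted, which caps the gain at $\beta<1/2$; the lower bound $\beta>3/8$ plays no role in this linear estimate and is imposed only to match the nonlinear estimates of Sections 3 and 4.
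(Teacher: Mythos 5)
Your leading-term computation is correct and matches the paper (stationary point $\nabla P(\mu)=x/t$ solved by Cardano, critical value $\tfrac34 t\mu_1^4+\tfrac12 t|\mu|^2$, Hessian $-t\,\mathrm{diag}(3\mu_1^2+1,1,\dots,1)$), and your interpolation arithmetic is self-consistent: the endpoints $s>3/2$ at $p=2$ and $s>d/2+1$ at $p=\infty$ do combine to $s>d/2-(d-1)/p+1$. But there is a genuine gap exactly where the content of the proposition lies: neither endpoint is proved, and your $L^2$ endpoint rests on a reduction that fails for quartic dispersion. For the second-order Schr\"odinger group one has the exact factorization $W(t)=MD\mathcal{F}M$ with $M=e^{i|x|^2/2t}$ a physical-space multiplier, so the remainder is $(M-1)\psi$ and $|e^{i\theta}-1|\le C|\theta|^{\beta}$ converts the weight into decay. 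Here that mechanism is available only in the transverse variables, where the phase is exactly quadratic --- this is precisely the paper's piece $R$, with error $(e^{i|y_\perp|^2/2t}-1)\psi$ handled through the one-dimensional dispersive bound for $W_{4LS}$. In the $\xi_1$ variable the "super-quadratic part" $\Psi$ you invoke is cubic-plus-quartic in $\xi_1-\mu_1$, hence a function of the frequency variable as well as of $x/t$: $e^{i\Psi}-1$ is \emph{not} a multiplication operator applied to the data, and "spending the weight $\langle x\rangle^{s}$ on $\Psi$" has no meaning as stated. The paper replaces this step by a concrete device you would need to supply or reinvent: the change of variables $\eta_1=\mu_1+\tfrac{1}{\sqrt2}(3\mu_1^2+1)^{-1/2}(\xi_1-\mu_1)\sqrt{\xi_1^2+2\mu_1\xi_1+3\mu_1^2+2}$, which turns the quartic phase into the \emph{exact} quadratic $\tfrac12 t(3\mu_1^2+1)(\eta_1-\mu_1)^2$, together with the integration-by-parts identity of Lemma \ref{lem1} (with the factor $\xi_1-\mu_1$ vanishing at the stationary point) to control the Jacobian deviation $1-d\eta_1/d\xi_1$ and the amplitude increment. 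The uniformity in $\mu_1$ that you correctly flag as the main obstacle is then achieved by the explicit splitting $|\xi_1-\mu_1|\lessgtr\sqrt{\mu_1^2+1}$, yielding the decaying weight $(\mu_1^2+1)^{-2\beta-1/p}$ as in (\ref{u5}); you name this balance but do not carry it out, and no off-the-shelf stationary-phase lemma applies, since $\hat\psi$ has only $H^{s_1}_{\xi_1}$ regularity with $s_1>1/2$ and no compact support.

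Two further points. The paper never interpolates: it proves the weighted estimate directly for every $2\le p\le\infty$ with $\langle\pt_{x_1}\rangle^{1-2/p}$ inserted inside the oscillatory integral. If you insist on interpolation you must justify a complex-interpolation step whose upper endpoint is the $L^\infty$-based Bessel space $\langle\pt_{x_1}\rangle^{-1}L^\infty$; Bessel-potential scales interpolate for $1<p<\infty$ but not routinely at $p=\infty$, so you would need, e.g., Stein's analytic-family theorem with the family $\langle\pt_{x_1}\rangle^{z}$ and verified growth bounds --- a nontrivial check that the proposal omits. Finally, your closing claim that the lower bound $\beta>3/8$ "plays no role in this linear estimate" misreads its origin: in the paper it arises \emph{inside} the linear proof, from integrability conditions such as $1/4+1/(4p)<\beta<1/2$ in the estimates of $L_{1,2}$ and $L_2$ (worst case $p=2$, giving $3/8$ uniformly over $p\ge2$); it is harmless only in the trivial sense that, for $t\ge2$, the conclusion for smaller $\beta$ follows from the one for $\beta$ near $1/2$ by monotonicity --- not because it is imported from the nonlinear sections.
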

 
To calculate the oscillatory integral effectively, 
we show the following elementary lemma.

\begin{lemma}\label{lem1} Let 
$a,b,c\in\rre$ and let 
$\phi\in C^2([a,b],\rre)$ and $\psi\in C^1
([a,b],{{\mathbb C}})$. Then 
\begin{eqnarray*}
\lefteqn{\int_a^{b}e^{-i\phi(\xi)}
\psi(\xi)d\xi}
\\&=&
\left[e^{-i\phi(\xi)}\frac{(\xi-c)\psi(\xi)}{
1-i(\xi-c)\phi'(\xi)}\right]_{a}^{b}
-\int_{a}^{b}e^{-i\phi(\xi)}
\frac{(\xi-c)\psi'(\xi)}{1-i(\xi-c)\phi'(\xi)}d\xi
\\
& &-i\int_{a}^{b}e^{-i\phi(\xi)}
\frac{(\xi-c)\{\phi'(\xi)
+(\xi-c)\phi''(\xi)\}\psi(\xi)}{
\{1-i(\xi-c)\phi'(\xi)\}^2}d\xi.
\end{eqnarray*}
\end{lemma}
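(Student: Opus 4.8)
The plan is to read the stated formula as the output of a single integration by parts and to verify it by differentiating a suitable boundary function. Throughout, write $\rho(\xi)=1-i(\xi-c)\phi'(\xi)$ for the common denominator. The crucial structural observation is that $(\xi-c)\phi'(\xi)$ is real-valued, so $\Re\rho(\xi)=1$ and hence $|\rho(\xi)|\ge1$ on $[a,b]$: the denominator never vanishes, which is exactly what makes this particular integration by parts legitimate even across stationary points of $\phi$ (this is the feature that will be exploited in the proof of Proposition \ref{linear}). Since $\phi\in C^2$ and $\psi\in C^1$, the function $F(\xi)=e^{-i\phi(\xi)}(\xi-c)\psi(\xi)/\rho(\xi)$ is $C^1$ on $[a,b]$, so the fundamental theorem of calculus applies to it.

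First I would differentiate $F$. By the product and quotient rules, together with $\tfrac{d}{d\xi}e^{-i\phi}=-i\phi'e^{-i\phi}$ and $\rho'=-i\{\phi'+(\xi-c)\phi''\}$, one obtains after clearing denominators
\[
F'(\xi)=\frac{e^{-i\phi(\xi)}}{\rho(\xi)^2}\Big\{-i\phi'(\xi)(\xi-c)\psi(\xi)\rho(\xi)+[\psi(\xi)+(\xi-c)\psi'(\xi)]\rho(\xi)-(\xi-c)\psi(\xi)\rho'(\xi)\Big\}.
\]
The key algebraic step is to simplify the first term inside the braces using the identity $-i(\xi-c)\phi'(\xi)=\rho(\xi)-1$, which rewrites $-i\phi'(\xi)(\xi-c)\psi(\xi)\rho(\xi)=\psi(\xi)\rho(\xi)^2-\psi(\xi)\rho(\xi)$. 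The term $-\psi\rho$ then cancels the $+\psi\rho$ coming from the product rule, and the leading contribution collapses to exactly $e^{-i\phi}\psi$. Substituting $\rho'=-i\{\phi'+(\xi-c)\phi''\}$ into the remaining pieces yields
\[
F'(\xi)=e^{-i\phi(\xi)}\psi(\xi)+e^{-i\phi(\xi)}\frac{(\xi-c)\psi'(\xi)}{\rho(\xi)}+ie^{-i\phi(\xi)}\frac{(\xi-c)\{\phi'(\xi)+(\xi-c)\phi''(\xi)\}\psi(\xi)}{\rho(\xi)^2}.
\]

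Finally I would integrate this identity over $[a,b]$, apply the fundamental theorem of calculus to the left-hand side (giving $[F(\xi)]_a^b$, which is precisely the stated boundary term), and solve for $\int_a^b e^{-i\phi}\psi\,d\xi$ by transposing the last two integrals. This reproduces the claimed formula verbatim. I do not expect any genuine analytic obstacle: the content is entirely algebraic bookkeeping, and the only step demanding care is the cancellation driven by $-i(\xi-c)\phi'=\rho-1$, which is exactly where the specific shape of the denominator $1-i(\xi-c)\phi'$ is used.
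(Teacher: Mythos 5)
Your proof is correct and is essentially the paper's argument run in reverse: the paper inserts the identity $e^{-i\phi(\xi)}=\{e^{-i\phi(\xi)}(\xi-c)\}'/(1-i(\xi-c)\phi'(\xi))$ and integrates by parts once, while you differentiate the resulting boundary function $F(\xi)=e^{-i\phi(\xi)}(\xi-c)\psi(\xi)/\rho(\xi)$ and apply the fundamental theorem of calculus, which is the same single integration by parts driven by the same cancellation $-i(\xi-c)\phi'=\rho-1$. Your explicit observations that $\Re\rho=1$ (so the denominator never vanishes) and that $F\in C^1$ are details the paper leaves implicit, but they do not change the route.
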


\begin{proof}[Proof of Lemma \ref{lem1}.] Lemma \ref{lem1}
follows from the combination of the 
identity 
\begin{eqnarray*}
e^{-i\phi(\xi)}=\frac{\{e^{-i\phi(\xi)}
(\xi-c)\}'}{
1-i(\xi-c)\phi'(\xi)}
\end{eqnarray*}
and  integration by parts. 
\end{proof}

\begin{proof}[Proof of Proposition \ref{linear}.]
Let $u$ be a solution to (\ref{4LS}). 
Then we have
\begin{eqnarray*}
u(t,x)
&=&
\left(\frac{1}{2\pi}\right)^{\frac{d}{2}}
\int_{\rre^{d}}e^{ix\xi-\frac{i}{2}t|\xi|^{2}-\frac{i}{4}t\xi_{1}^{4}}
\hat{\psi}(\xi)d\xi=\int_{\rre^{d}}K(t,x-y)\psi(y)dy,
\end{eqnarray*}
where
\begin{eqnarray*}
K(t,z)
=\left(\frac{1}{2\pi}\right)^{d}
\int_{\rre^{d}}e^{iz\xi-\frac{i}{2}t|\xi|^{2}-\frac{i}{4}t\xi_{1}^{4}}
d\xi.
\end{eqnarray*}
By the Fresnel integral formula
\begin{eqnarray*}
\frac{1}{\sqrt{2\pi}}
\int_{\rre}e^{iz_{j}\xi_{j}-\frac{i}{2}t\xi_{j}^{2}}d\xi_{j}
=t^{-\frac12}e^{\frac{iz_{j}^{2}}{2t}-i\frac{\pi}{4}},
\end{eqnarray*}
for $j=2.\cdots,d$, we have
\begin{eqnarray*}
K(t,z)
=\left(\frac{1}{2\pi}\right)^{\frac{d+1}{2}}
t^{-\frac{d-1}{2}}e^{\frac{i|z_{\perp}|^{2}}{2t}-i\frac{d-1}{4}\pi}
\int_{\rre}e^{iz_{1}\xi_{1}-\frac{i}{2}t\xi_{1}^{2}-\frac{i}{4}t\xi_{1}^{4}}
d\xi_{1}.
\end{eqnarray*}
Therefore, we find
\begin{eqnarray*}
u(t,x)
=\frac{1}{\sqrt{2\pi}}
t^{-\frac{d-1}{2}}e^{\frac{i}{2}t|\mu_{\perp}|^{2}-i\frac{d-1}{4}\pi}
\int_{\rre}e^{ix_{1}\xi_{1}-\frac{i}{2}t\xi_{1}^{2}-\frac{i}{4}t\xi_{1}^{4}}
{{\mathcal F}}[e^{\frac{i|y_{\perp}|^{2}}{2t}}\psi](\xi_{1},\frac{x_{\perp}}{t})d\xi_{1}.
\end{eqnarray*}
We split $u$ into the following two pieces: 
\begin{eqnarray}
\lefteqn{u(t,x)}
\nonumber\\
&=&\frac{1}{\sqrt{2\pi}}
t^{-\frac{d-1}{2}}e^{\frac{i}{2}t|\mu_{\perp}|^{2}-i\frac{d-1}{4}\pi}
\int_{\rre}e^{ix_{1}\xi_{1}-\frac{i}{2}t\xi_{1}^{2}-\frac{i}{4}t\xi_{1}^{4}}
{{\mathcal F}}[\psi](\xi_{1},\frac{x_{\perp}}{t})d\xi_{1}
\nonumber\\
& &+\frac{1}{\sqrt{2\pi}}
t^{-\frac{d-1}{2}}e^{\frac{i}{2}t|\mu_{\perp}|^{2}-i\frac{d-1}{4}\pi}
\nonumber\\
& &\qquad\times
\int_{\rre}e^{ix_{1}\xi_{1}-\frac{i}{2}t\xi_{1}^{2}-\frac{i}{4}t\xi_{1}^{4}}
{{\mathcal F}}[(e^{\frac{i|y_{\perp}|^{2}}{2t}}-1)\psi](\xi_{1},\frac{x_{\perp}}{t})d\xi_{1}
\nonumber\\
&=:&L+R.
\label{u1}
\end{eqnarray}
To evaluate $L$, we split $L$ into 
\begin{eqnarray}
L&=&
\frac{1}{\sqrt{2\pi}}
t^{-\frac{d-1}{2}}e^{\frac{i}{2}t|\mu_{\perp}|^{2}-i\frac{d-1}{4}\pi}
{{\mathcal F}}[\psi](\mu)
\int_{\rre}e^{ix_{1}\xi_{1}-\frac{i}{2}t\xi_{1}^{2}-\frac{i}{4}t\xi_{1}^{4}}
d\xi_{1}\nonumber\\
& &+\frac{1}{\sqrt{2\pi}}
t^{-\frac{d-1}{2}}e^{\frac{i}{2}t|\mu_{\perp}|^{2}-i\frac{d-1}{4}\pi}
\nonumber\\
& &\qquad\times
\int_{\rre}e^{ix_{1}\xi_{1}-\frac{i}{2}t\xi_{1}^{2}-\frac{i}{4}t\xi_{1}^{4}}
({{\mathcal F}}[\psi](\xi_{1},\frac{x_{\perp}}{t})
-{{\mathcal F}}[\psi](\mu_{1},\frac{x_{\perp}}{t}))d\xi_{1}\nonumber\\
&=:&L_{1}(t,x)+L_{2}(t,x).
\label{u2}
\end{eqnarray}
We rewrite $L_{1}$ as follows:
\begin{eqnarray*}
L_{1}(t,x)=
\frac{1}{\sqrt{2\pi}}
t^{-\frac{d-1}{2}}
e^{\frac34it\mu_{1}^{4}+\frac{i}{2}t|\mu|^{2}-i\frac{d-1}{4}\pi}
{{\mathcal F}}[\psi](\mu)
\int_{\rre}e^{-itS(\mu_{1},\xi_{1})}d\xi_{1},
\end{eqnarray*}
where 
$S(\mu_{1},\xi_{1})$ is defined by 
\begin{eqnarray*}
S(\mu_{1},\xi_{1})
&=&\frac{1}{4}\xi_{1}^{4}+\frac{1}{2}\xi_{1}^{2}
-(\mu_{1}^{3}+\mu)\xi_{1}+\frac34\mu_{1}^{4}+\frac12\mu_{1}^{2}.
\end{eqnarray*}
Let
\begin{eqnarray*}
\eta_{1}
=\mu_{1}+\frac{1}{\sqrt{2}}\frac{1}{\sqrt{3\mu_{1}^2+1}}
(\xi_{1}-\mu_{1})
\sqrt{\xi_{1}^2+2\mu_{1}\xi_{1}+3\mu_{1}^2+2}.
\end{eqnarray*}
We rewrite $L_{1}$ as follows:
\begin{eqnarray}
\lefteqn{L_{1}(t,x)}\nonumber\\
&=&
\frac{1}{\sqrt{2\pi}}
t^{-\frac{d-1}{2}}
{{\mathcal F}}[\psi](\mu)
e^{\frac34it\mu_{1}^{4}+\frac{i}{2}t|\mu|^{2}-i\frac{d-1}{4}\pi}
\nonumber\\
& &\times
\left(\int_{\rre}
e^{-itS(\mu_{1},\xi_{1})}
\frac{d\eta_{1}}{d\xi_{1}}d\xi_{1}
+
\int_{\rre}
e^{-itS(\mu_{1}\xi_{1})}
(1-\frac{d\eta_{1}}{d\xi_{1}})d\xi_{1}
\right)\nonumber\\
&=:&
t^{-\frac{d-1}{2}}
{{\mathcal F}}[\psi](\mu)
e^{\frac34it\mu_{1}^{4}+\frac{i}{2}t|\mu|^{2}-i\frac{d-1}{4}\pi}
L_{1,1}(t,x)
\nonumber\\
& &+t^{-\frac{d-1}{2}}{{\mathcal F}}[\psi](\mu)e^{\frac{i}{2}t|\mu_{\perp}|^{2}-i\frac{d-1}{4}\pi}
L_{1,2}(t,x).
\label{u3}
\end{eqnarray}
For $L_{1,1}$, changing the variable $\xi_{1}\mapsto\eta_{1}$, 
we have
\begin{eqnarray*}
L_{1,1}(t,x)=\frac{1}{\sqrt{2\pi}}
\int_{\rre}
e^{-\frac12it(3\mu_{1}^2+1)(\eta_{1}-\mu_{1})^2}d\eta_{1}.
\end{eqnarray*}
In addition, changing the variable $\zeta_{1}=(1/\sqrt{2})t^{1/2}
\sqrt{3\mu_{1}^2+1}(\eta_{1}-\mu_{1})$ ($\eta_{1}\mapsto\zeta_{1}$) and 
using the Fresnel integral formula, we obtain
\begin{eqnarray}
L_{1,1}(t,x)=\sqrt{\frac{2}{\pi}}
\frac{t^{-1/2}}{\sqrt{3\mu_{1}^{2}+1}}
\int_{\rre}e^{-i\zeta^2}d\zeta
=
\frac{t^{-1/2}}{\sqrt{3\mu_{1}^{2}+1}}e^{-i\frac{\pi}{4}}.
\label{u4}
\end{eqnarray}
For $L_{1,2}$, using Lemma \ref{lem1}, we have
\begin{eqnarray}
\lefteqn{L_{1,2}(t,x)}
\nonumber\\
&=&\frac{1}{\sqrt{2\pi}}\int_{\rre}e^{ix_{1}\xi_{1}-\frac{i}{2}t\xi_{1}^{2}-\frac{i}{4}t\xi_{1}^{4}}
(1-\frac{d\eta_{1}}{d\xi_{1}})d\xi_{1}\nonumber\\
&=&
\frac{1}{\sqrt{2\pi}}\int_{\rre}e^{ix_{1}\xi_{1}-\frac{i}{2}t\xi_{1}^{2}-\frac{i}{4}t\xi_{1}^{4}}
\frac{(\xi_{1}-\mu_{1})}{1-it(\xi_{1}-\mu_{1})^{2}
(\xi_{1}^{2}+\mu_{1}\xi_{1}+\mu_{1}^{2}+1)}\frac{d^2\eta_{1}}{d\xi_{1}^2}d\xi_{1}
\nonumber\\
& &-\frac{it}{\sqrt{2\pi}}
\int_{\rre}e^{ix_{1}\xi_{1}-\frac{i}{2}t\xi_{1}^{2}-\frac{i}{4}t\xi_{1}^{4}}
\frac{(\xi_{1}-\mu_{1})^2(4\xi_{1}^{2}+\mu_{1}\xi_{1}+\mu_{1}^{2}+2)}{
\{1-it(\xi_1-\mu_{1})^{2}(\xi_{1}^{2}+\mu_{1}\xi_{1}+\mu_{1}^{2}+1)\}^2}
\nonumber\\
& &\qquad\qquad\qquad\qquad\qquad\times
(1-\frac{d\eta_{1}}{d\xi_{1}})d\xi_{1}.\nonumber
\end{eqnarray}
Since
\begin{eqnarray*}
\frac{d\eta_{1}}{d\xi_{1}}&=&
\sqrt{2}\frac{1}{\sqrt{3\mu_{1}^2+1}}
\frac{\xi_{1}^2+\mu_{1}\xi_{1}
+\mu_{1}^{2}+1}
{\sqrt{\xi_{1}^2+2\mu_{1}\xi_{1}+3\mu_{1}^2+2}},\\
\frac{d^2\eta_{1}}{d\xi_{1}^2}&=&
\sqrt{2}\frac{1}{\sqrt{3\mu_{1}^2+1}}
\frac{\xi_{1}^3+3\mu_{1}\xi_{1}^2
+(6\mu_{1}^2+3)\xi_{1}
+(2\mu_{1}^3+\mu_{1})}
{(\xi_{1}^2+2\mu_{1}\xi_{1}+3\mu_{1}^2+2)^{3/2}},
\end{eqnarray*}
we see that
\begin{eqnarray*}
\sup_{\xi_{1}\in\rre}\left|\frac{d^2\eta_{1}}{d\xi_{1}^2}\right|
&\le&\frac{C}{\sqrt{3\mu_{1}^2+1}},\\
\left|1-\frac{d\eta_{1}}{d\xi_{1}}\right|&=&
\left|\frac{d\eta_{1}}{d\xi_{1}}\biggl|_{\xi_{1}=\mu_{1}}
-\frac{d\eta_{1}}{d\xi_{1}}\right|
\le\sup_{\xi_{1}\in\rre}\left|\frac{d^2\eta_{1}}{d\xi_{1}^2}\right|
|\xi_{1}-\mu_{1}|
\le C\frac{|\xi_{1}-\mu_{1}|}{\sqrt{3\mu_{1}^2+1}}.
\end{eqnarray*}
Furthermore, we easily see that 
\begin{eqnarray}
\sup_{\xi_{1}\in\rre}\left|t
\frac{(\xi_{1}-\mu_{1})^2(4\xi_{1}^{2}+\mu_{1}\xi_{1}+\mu_{1}^{2}+2)}{
1-it(\xi_{1}-\mu_{1})^{2}(\xi_{1}^{2}+\mu_{1}\xi_{1}+\mu_{1}^{2}+1)}
\right|\le C.\label{se}
\end{eqnarray}
Combining above three  
inequalities, we have
\begin{eqnarray*}
|\langle\pt_{x_{1}}\rangle^{1-\frac2p}L_{1,2}(t,x)|
\le \frac{C}{\sqrt{3\mu_{1}^2+1}}
\int_{\rre}
\frac{\langle\xi_{1}\rangle^{1-\frac2p}|\xi_{1}-\mu_{1}|}{
1+t(\xi_{1}-\mu_{1})^2(\xi_{1}^2+\mu_{1}\xi_{1}+\mu_{1}^2+1)}d\xi_{1}.
\end{eqnarray*}
We split the right hand side of above inequality into 
the following two pieces:
\begin{eqnarray*}
\lefteqn{|\langle\pt_{x_{1}}\rangle^{1-\frac2p}L_{1,2}(t,x)|}\\
&\le&
\frac{C}{\sqrt{3\mu_{1}^2+1}}
\int_{|\xi_{1}-\mu_{1}|\le\sqrt{\mu_{1}^2+1}}
\frac{\langle\xi_{1}\rangle^{1-\frac2p}|\xi_{1}-\mu_{1}|}{
1+t(\xi_{1}-\mu_{1})^2(\xi_{1}^2+\mu_{1}\xi_{1}+\mu_{1}^2+1)}d\xi_{1}\\
& &
+\frac{C}{\sqrt{3\mu_{1}^2+1}}\int_{|\xi_{1}-\mu_{1}|\ge\sqrt{\mu_{1}^2+1}}
\frac{\langle\xi_{1}\rangle^{1-\frac2p}|\xi_{1}-\mu_{1}|}{
1+t(\xi_{1}-\mu_{1})^2(\xi_{1}^2+\mu_{1}\xi_{1}+\mu_{1}^2+1)}d\xi_{1}.
\end{eqnarray*}
Using the inequalities 
\begin{eqnarray*}
\xi_{1}^2+\mu_{1}\xi_{1}+\mu_{1}^2+1\ge
\left\{
\begin{array}{l}
\medskip\displaystyle{
\frac12(\mu_{1}^2+1),\ \qquad\text{if}
\ |\xi_{1}-\mu_{1}|\le\sqrt{\mu_{1}^2+1},}\\
\displaystyle{
\frac14(\xi-\mu_{1})^2,\qquad\text{if}
\ |\xi_{1}-\mu_{1}|\ge\sqrt{\mu_{1}^2+1},}
\end{array}
\right.
\end{eqnarray*}
we have 
\begin{eqnarray}
\lefteqn{|\langle\pt_{x_{1}}\rangle^{1-\frac2p}L_{1,2}(t,x)|}\nonumber\\
&\le&\frac{C}{\sqrt{3\mu_{1}^2+1}}
\int_{|\xi_{1}-\mu_{1}|\le\sqrt{\mu_{1}^2+1}}
\frac{\langle\xi_{1}\rangle^{1-\frac2p}|\xi_{1}-\mu_{1}|}{
1+t(\mu_{1}^2+1)(\xi_{1}-\mu_{1})^2}d\xi_{1}
\nonumber\\
& &
+\frac{C}{\sqrt{3\mu_{1}^2+1}}
\int_{|\xi_{1}-\mu_{1}|\ge\sqrt{\mu_{1}^2+1}}
\frac{\langle\xi_{1}\rangle^{1-\frac2p}|\xi_{1}-\mu_{1}|}{1+t|\xi_{1}-\mu_{1}|^4}d\xi_{1}\nonumber\\
&\le&C
t^{-\beta-\frac12}(\mu_{1}^2+1)^{-\beta-\frac1p-\frac12}
\int_{|\xi_{1}-\mu_{1}|\le\sqrt{\mu_{1}^2+1}}
|\xi_{1}-\mu_{1}|^{-2\beta}d\xi_{1}
\nonumber\\
& &
+C
t^{-\beta-\frac12}(\mu_{1}^2+1)^{-\frac12}
\int_{|\xi_{1}-\mu_{1}|\ge\sqrt{\mu_{1}^2+1}}
|\xi_{1}-\mu_{1}|^{-4\beta-\frac2p}d\xi_{1}
\nonumber\\
&\le&Ct^{-\beta-\frac12}(\mu_{1}^2+1)^{-2\beta-\frac1p},
\label{u5}
\end{eqnarray}
where $\max(0,1/4-1/(2p))<\beta<1/2$. 
By (\ref{u3}), (\ref{u4}) and (\ref{u5}), we have
\begin{eqnarray}
L_{1}(t,x)
=\frac{t^{-\frac{d}{2}}}{\sqrt{3\mu_{1}^{2}+1}}{{\mathcal F}}[\psi](\mu)
e^{\frac34it\mu_{1}^{4}+\frac{i}{2}t|\mu|^{2}-i\frac{d}{4}\pi}
+R_{1}(t,x),\label{l}
\end{eqnarray}
where $R_{1}$ satisfies
\begin{eqnarray*}
|\langle\pt_{x_{1}}\rangle^{1-\frac2p}R_{1}(t,x)|\le 
Ct^{-\frac{d}{2}-\beta}(\mu_{1}^2+1)^{-2\beta-\frac1p}
|{{\mathcal F}}[\psi](\mu)|.
\end{eqnarray*}
Hence the H\"older and Sobolev inequalities yield 
\begin{eqnarray*}
\|\langle\pt_{x_{1}}\rangle^{1-\frac2p}
R_{1}(t)\|_{L_{x_{1}}^{p}}
&\le& Ct^{-\frac{d}{2}-\beta}
\|(\mu_{1}^2+1)^{-2\beta-\frac1p}
{{\mathcal F}}[\psi](\mu)
\|_{L_{x_{1}}^{p}}\\
&\le& Ct^{-\frac{d}{2}-\beta}
\|(\mu_{1}^2+1)^{-2\beta-\frac1p}\|_{L_{x_{1}}^{p}}
\|{{\mathcal F}}[\psi](\cdot,\mu_{\perp})
\|_{L_{\xi_{1}}^{\infty}}\\
&\le& Ct^{\frac1p-\frac{d}{2}-\beta}
\|{{\mathcal F}}[\psi](\cdot,\mu_{\perp})
\|_{L_{\xi_{1}}^{\infty}}\\
&\le& Ct^{\frac1p-\frac{d}{2}-\beta}
\|{{\mathcal F}}[\psi](\cdot,\mu_{\perp})
\|_{H_{\xi_{1}}^{s_{1}}}
\end{eqnarray*}
for $2\le p\le\infty$ and $1/(4p)<\beta<1/2$, where $s_{1}>1/2$. 
Combining the above inequality and the Minkowski inequality, 
we have
\begin{eqnarray}
\|\langle\pt_{x_{1}}\rangle^{1-\frac2p}
R_{1}(t)\|_{L_{x}^{p}}
&\le&Ct^{\frac1p-\frac{d}{2}-\beta}
\|\|{{\mathcal F}}[\psi](\xi_{1},\mu_{\perp})
\|_{H_{\xi_{1}}^{s_{1}}}\|_{L_{x_{\perp}}^{p}}
\nonumber\\
&\le&Ct^{\frac1p-\frac{d}{2}-\beta}
\|\|{{\mathcal F}}[\psi](\xi_{1},\mu_{\perp})
\|_{L_{x_{\perp}}^{p}}\|_{H_{\xi_{1}}^{s_{1}}}
\nonumber\\
&\le&Ct^{-d(\frac12-\frac1p)-\beta}
\|\|{{\mathcal F}}[\psi]\|_{L_{\xi_{\perp}}^{p}}
\|_{H_{\xi_{1}}^{s_{1}}}
\nonumber\\
&\le&Ct^{-d(\frac12-\frac1p)-\beta}
\|\|{{\mathcal F}}[\psi]\|_{H_{\xi_{\perp}}^{s_{\perp}}}
\|_{H_{\xi_{1}}^{s_{1}}}
\nonumber\\
&\le&Ct^{-d(\frac12-\frac1p)-\beta}
\|\psi\|_{H^{0,s}},\label{r11}
\end{eqnarray}
where $s=s_{1}+s_{\perp}$ and 
$s_{\perp}=(d-1)(1/2-1/p)$ for $2\le p<\infty$ 
and $s_{\perp}>(d-1)/2$ for $p=\infty$.

Next we evaluate $L_{2}$. We write 
\begin{eqnarray*}
L_{2}(t,x)=:
t^{-\frac{d-1}{2}}e^{\frac{i|x_{\perp}|^{2}}{2t}-i\frac{d-1}{4}\pi}\tilde{L}_{2}(t,x).
\end{eqnarray*}
Using Lemma \ref{lem1}, we have
\begin{eqnarray}
\lefteqn{
\langle\pt_{x_{1}}\rangle^{1-\frac2p}
\tilde{L}_{2}(t,x)}\nonumber\\
&=&
-\frac{1}{\sqrt{2\pi}}\int_{\rre}e^{ix_{1}\xi_{1}-\frac{i}{2}t\xi_{1}^{2}-\frac{i}{4}t\xi_{1}^{4}}
\frac{\langle\xi_{1}\rangle^{1-\frac2p}(\xi_{1}-\mu_{1})}{1-it(\xi_{1}-\mu_{1})^{2}
(\xi_{1}^{2}+\mu_{1}\xi_{1}+\mu_{1}^{2}+1)}
\nonumber\\
& &\qquad\qquad\qquad\times
\pt_{\xi_{1}}{{\mathcal F}}[\psi](\xi_{1},\frac{x_{\perp}}{t})d\xi_{1}
\nonumber\\
& &-
\frac{it}{\sqrt{2\pi}}\int_{\rre}e^{ix_{1}\xi_{1}-\frac{i}{2}t\xi_{1}^{2}-\frac{i}{4}t\xi_{1}^{4}}
\frac{\langle\xi_{1}\rangle^{1-\frac2p}
(\xi_{1}-\mu_{1})^{2}(4\xi_{1}^{2}+\mu_{1}\xi_{1}+\mu_{1}^{2}+2)}{
\{1-it(\xi_{1}-\mu_{1})^{2}(\xi_{1}^{2}+\mu_{1}\xi_{1}+\mu_{1}^{2}+1)\}^2}
\nonumber\\
& &\qquad\qquad\qquad\times
({{\mathcal F}}[\psi](\xi_{1},\frac{x_{\perp}}{t})
-{{\mathcal F}}[\psi](\mu_{1},\frac{x_{\perp}}{t}))d\xi_{1}.\nonumber
\end{eqnarray}
Form (\ref{se}), we have
\begin{eqnarray*}
\lefteqn{|\langle\pt_{x_{1}}\rangle^{1-\frac2p}\tilde{L}_{2}(t,x)|}\\
&\le& 
C\|\pt_{\xi_{1}}{{\mathcal F}}[\psi]
(\cdot,\frac{x_{\perp}}{t})\|_{L_{\xi_{1}}^{\infty}}
\int_{\rre}
\frac{\langle\xi_{1}\rangle^{1-\frac2p}|\xi_{1}-\mu_{1}|}{
1+t(\xi_{1}-\mu_{1})^{2}(\xi_{1}^2+\mu_{1}\xi_{1}+\mu_{1}^2+1)}d\xi_{1}.
\end{eqnarray*}
The same argument as that in (\ref{u5}) yields 
\begin{eqnarray*}
|\langle\pt_{x_{1}}\rangle^{1-\frac2p}\tilde{L}_{2}(t,x)|
\le Ct^{-\beta-1/2}(\mu_{1}^{2}+1)^{-2\beta-\frac1p+\frac12}\|\pt_{\xi_{1}}{{\mathcal F}}[\psi]
(\cdot,\frac{x_{\perp}}{t})\|_{L_{\xi_{1}}^{\infty}}.
\end{eqnarray*}
Hence
\begin{eqnarray*}
\lefteqn{\|\langle\pt_{x_{1}}\rangle^{1-\frac2p} L_{2}(t)\|_{L_{x_{1}}^{p}}}\\
&\le& Ct^{-\frac{d}{2}-\beta}
\|(\mu_{1}^2+1)^{-2\beta-\frac1p+\frac12}\|_{L_{x_{1}}^{p}}
\|\pt_{\xi_{1}}{{\mathcal F}}[\psi](\cdot,\frac{x_{\perp}}{t})
\|_{L_{\xi_{1}}^{\infty}}\\
&\le& Ct^{\frac1p-\frac{d}{2}-\beta}
\|(\mu_{1}^2+1)^{-2\beta+\frac12}\|_{L_{\mu_{1}}^{p}}
\|\pt_{\xi_{1}}{{\mathcal F}}[\psi](\cdot,\frac{x_{\perp}}{t})
\|_{L_{\xi_{1}}^{\infty}}\\
&\le& Ct^{\frac1p-\frac{d}{2}-\beta}
\|\pt_{\xi_{1}}{{\mathcal F}}[\psi](\cdot,\frac{x_{\perp}}{t})
\|_{H_{\xi_{1}}^{s_{1}}}
\end{eqnarray*}
for $2\le p\le\infty$ and $1/4+1/(4p)<\beta<1/2$. 
Combining the above inequality and the Minkowski inequality, 
we have 
\begin{eqnarray}
\|\langle\pt_{x_{1}}\rangle^{1-\frac2p}L_{2}(t)\|_{L_{x}^{p}}
&\le&Ct^{\frac1p-\frac{d}{2}-\beta}
\|\|\pt_{\xi_{1}}{{\mathcal F}}[\psi](\cdot,\frac{x_{\perp}}{t})
\|_{H_{\xi_{1}}^{s_{1}}}\|_{L_{x_{\perp}}^{p}}
\nonumber\\
&\le&Ct^{\frac1p-\frac{d}{2}-\beta}
\|\|\pt_{\xi_{1}}{{\mathcal F}}[\psi](\xi_{1},\frac{\cdot}{t})
\|_{L_{x_{\perp}}^{p}}\|_{H_{\xi_{1}}^{s_{1}}}
\nonumber\\
&\le&Ct^{-d(\frac12-\frac1p)-\beta}
\|\|\pt_{\xi_{1}}{{\mathcal F}}[\psi]\|_{L_{\xi_{\perp}}^{p}}
\|_{H_{\xi_{1}}^{s_{1}}}
\nonumber\\
&\le&Ct^{-d(\frac12-\frac1p)-\beta}
\|\|\pt_{\xi_{1}}{{\mathcal F}}[\psi]\|_{H_{\xi_{\perp}}^{s_{\perp}}}
\|_{H_{\xi_{1}}^{s_{1}}}
\nonumber\\
&\le&Ct^{-d(\frac12-\frac1p)-\beta}
\|\psi\|_{H^{0,s+1}}.\label{r12}
\end{eqnarray}
Finally let us evaluate $R$. $R$ can be rewritten as 
\begin{eqnarray*}
R=
t^{-\frac{d-1}{2}}e^{\frac{i}{2}t|\mu_{\perp}|^{2}-i\frac{d-1}{4}\pi}
W_{4LS}(t){{\mathcal F}}_{\perp}
[(e^{\frac{i|y_{\perp}|^{2}}{2t}}-1)\psi](x_{1},\frac{x_{\perp}}{t}),
\end{eqnarray*}
where $\{W_{4LS}(t)\}_{t\in\rre}$ is a unitary group generated by 
the linear operator $(i/2)\pt_{x_{1}}^{2}
-(i/4)\pt_{x_{1}}^{4}$:
\begin{eqnarray*}
W_{4LS}(t)\phi=\frac{1}{\sqrt{2\pi}}
\int_{\rre}e^{ix_{1}\xi_{1}-\frac{i}{2}t\xi_{1}^{2}-\frac{i}{4}t\xi_{1}^{4}}
\hat{\phi}(\xi_{1})d\xi_{1}.
\end{eqnarray*}
By using the decay estimate (see \cite{BKS,S} for instance), 
\begin{eqnarray*}
\|\langle\pt_{x_{1}}\rangle^{1-\frac2p}W_{4LS}(t)\psi\|_{L_{x_{1}}^{p}}
\le Ct^{-(\frac12-\frac1p)}\|\psi\|_{L_{x_{1}}^{p'}},
\end{eqnarray*}
we obtain
\begin{eqnarray*}
\|\langle\pt_{x_{1}}\rangle^{1-\frac2p}R(t)\|_{L_{x_{1}}^{p}}
&\le&Ct^{-\frac{d}{2}+\frac1p}\|
{{\mathcal F}}_{\perp}[(e^{\frac{i|y_{\perp}|^{2}}{2t}}-1)
\psi](x_{1},\frac{x_{\perp}}{t})\|_{L_{x_{1}}^{p'}}.
\end{eqnarray*}
Combining the above inequality and the Minkowski inequality, 
we have
\begin{eqnarray}
\lefteqn{\|\langle\pt_{x_{1}}\rangle^{1-\frac2p}R(t)\|_{L_{x}^{p}}}
\nonumber\\
&\le&Ct^{-\frac{d}{2}+\frac1p}\|\|
{{\mathcal F}}_{\perp}[(e^{\frac{i|y_{\perp}|^{2}}{2t}}-1)
\psi](x_{1},\frac{x_{\perp}}{t})\|_{L_{x_{1}}^{p'}}\|_{L_{x_{\perp}}^{p}}
\nonumber\\
&\le&Ct^{-\frac{d}{2}+\frac1p}\|\|
{{\mathcal F}}_{\perp}[(e^{\frac{i|y_{\perp}|^{2}}{2t}}-1)
\psi](x_{1},\frac{x_{\perp}}{t})\|_{L_{x_{\perp}}^{p}}\|_{L_{x_{1}}^{p'}}
\nonumber\\
&\le&Ct^{-d(\frac12-\frac1p)}\|
(e^{\frac{i|x_{\perp}|^{2}}{2t}}-1)
\psi\|_{L_{x}^{p'}}\nonumber\\
&\le&Ct^{-d(\frac12-\frac1p)-\beta}\|
|x_{\perp}|^{2\beta}\psi\|_{L_{x}^{p'}}
\nonumber\\
&\le&Ct^{-d(\frac12-\frac1p)-\beta}\|
\psi\|_{H_{x}^{0,s}},
\label{r14}
\end{eqnarray}
where $0<\beta<1$ and $s>d/2-d/p+2\beta$. Collecting (\ref{u1}), (\ref{u2}), (\ref{l}), 
(\ref{r11}),  (\ref{r12}) and (\ref{r14}), 
we obtain the desired result. 
\end{proof}

To prove Theorems \ref{nonlinear1} and \ref{nonlinear2}, 
we employ the Strichartz 
estimate for 
the linear fourth order Schr\"{o}dinger equation 
(\ref{4LS}). 

\begin{lemma}\label{S} 
Let $d\ge3$ and let $(q_{j},r_{j})$ ($j=1,2$) satisfy
\[
	\frac{2}{q_{j}}+\frac{d}{r_{j}}=\frac{d}{2},\qquad 2\le r_{j}\le\frac{2d}{d-2}\ 
	\text{and}\ (q_{j},r_{j},d)\neq(2,\infty,2).
\]
Then, the inequalitiy 
\begin{equation*}
\left\|\langle\pt_{x_{1}}\rangle^{\frac{2}{q_{1}d}}
\int_t^{+\infty}W(t-t')F(t')dt'
\right\|_{L_t^{q_{1}}(t,\infty;L_{x}^{r_{1}})}
\le C
\|\langle\pt_{x_{1}}\rangle^{-\frac{2}{q_{2}d}}
F\|_{L_t^{q_{2}'}(t,\infty;L_{x}^{r_{2}'})} 
\end{equation*}
holds.
\end{lemma}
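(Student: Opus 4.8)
The plan is to reduce the estimate to a standard Strichartz argument by exploiting the tensor structure of the propagator, and then to run a duality/$TT^*$ argument that distributes the anisotropic derivative loss symmetrically between the two admissible pairs.

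First I would factor the propagator. Since $\frac12\Delta-\frac14\pt_{x_1}^4=(\frac12\pt_{x_1}^2-\frac14\pt_{x_1}^4)+\frac12\Delta_\perp$ and the two pieces act on the separate variables $x_1$ and $x_\perp\in\rre^{d-1}$, we have $W(t)=W_{4LS}(t)\,e^{\frac{it}{2}\Delta_\perp}$, a tensor product of the one-dimensional fourth-order group introduced in the proof of Proposition~\ref{linear} and the free Schr\"odinger group in the transverse variables. Next I would derive a family of weighted dispersive estimates. From the one-dimensional bound $\|\langle\pt_{x_1}\rangle^{1-\frac2r}W_{4LS}(t)\phi\|_{L^r_{x_1}}\le C|t|^{-(\frac12-\frac1r)}\|\phi\|_{L^{r'}_{x_1}}$ (the estimate of \cite{BKS,S} recalled just before (\ref{r14})), the classical decay $\|e^{\frac{it}{2}\Delta_\perp}g\|_{L^r_{x_\perp}}\le C|t|^{-(d-1)(\frac12-\frac1r)}\|g\|_{L^{r'}_{x_\perp}}$ in $\rre^{d-1}$, and Minkowski's inequality in the mixed norm $L^r_{x_1}L^r_{x_\perp}$ (applicable precisely because $r\ge2\ge r'$), one obtains for $2\le r\le 2d/(d-2)$ the $d$-dimensional estimate $\|\langle\pt_{x_1}\rangle^{1-\frac2r}W(t)\psi\|_{L^r_x}\le C|t|^{-d(\frac12-\frac1r)}\|\psi\|_{L^{r'}_x}$. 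Writing $\theta(r)=\frac12-\frac1r=\frac{2}{rd}\cdot\frac{d}{2}=\frac{2}{qd}$ for the admissible partner $q$, this reads $\|\langle\pt_{x_1}\rangle^{2\theta(r)}W(t)\psi\|_{L^r_x}\le C|t|^{-d\theta(r)}\|\psi\|_{L^{r'}_x}$; together with the $L^2$ conservation $\|W(t)\psi\|_{L^2}=\|\psi\|_{L^2}$, these are the only inputs needed.

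Then I would close the inhomogeneous estimate by duality. Setting $\tilde F=\langle\pt_{x_1}\rangle^{-\frac{2}{q_2d}}F=\langle\pt_{x_1}\rangle^{-\theta(r_2)}F$ and testing against $G\in L^{q_1'}_tL^{r_1'}_x$, the quantity to be bounded becomes the bilinear form $\int\!\!\int_{t'>t}\langle\langle\pt_{x_1}\rangle^{\theta(r_1)+\theta(r_2)}W(t-t')\tilde F(t'),G(t)\rangle\,dt'\,dt$, the two weights having merged onto the propagator because $\langle\pt_{x_1}\rangle$ commutes with $W$. The key off-diagonal dispersive bound $\|\langle\pt_{x_1}\rangle^{\theta(r_1)+\theta(r_2)}W(\tau)\tilde F\|_{L^{r_1}}\le C|\tau|^{-\frac{d}{2}(\theta(r_1)+\theta(r_2))}\|\tilde F\|_{L^{r_2'}}$ I would produce by Stein complex interpolation applied to the analytic family $z\mapsto\langle\pt_{x_1}\rangle^{2z}W(\tau)$ between the two diagonal estimates above; the relevant intermediate point simultaneously averages the exponents (to $L^{r_2'}\to L^{r_1}$), the weight (to $\theta(r_1)+\theta(r_2)$) and the decay rate. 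Inserting this bound and applying the Hardy--Littlewood--Sobolev inequality in time with $\lambda=\frac1{q_1}+\frac1{q_2}$ — which balances exactly since $\frac1{q_1'}+\frac1{q_2'}+\lambda=2$ and $0<\lambda<1$ for interior pairs — controls the bilinear form by $\|\tilde F\|_{L^{q_2'}_tL^{r_2'}_x}\|G\|_{L^{q_1'}_tL^{r_1'}_x}$, which is the untruncated estimate. The retarded truncation $\int_t^{+\infty}$ is then recovered from the Christ--Kiselev lemma, valid here because $q_1>q_2'$ away from the double endpoint.

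I expect the main obstacle to be the correct bookkeeping of the anisotropic weight. The loss $\langle\pt_{x_1}\rangle^{1-2/r}$ lives only in the $x_1$ direction and depends on $r$, so one cannot simply pass to the $L^1\to L^\infty$ endpoint, which would over-charge the weight to the full $\langle\pt_{x_1}\rangle^{1}$ and destroy the sharp exponents; extracting the exact, smaller powers $\pm\frac{2}{q_jd}$ is what forces the off-diagonal interpolation step. One must also check that the Minkowski step in the dispersive estimate and the Hardy--Littlewood--Sobolev range both hold across the whole admissible band $2\le r_j\le 2d/(d-2)$, which is where the hypothesis $d\ge3$ enters, through the transverse Schr\"odinger admissibility in $\rre^{d-1}$.
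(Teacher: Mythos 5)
Your reductions are sound up to one point: the tensor factorization $W(t)=W_{4LS}(t)\,e^{\frac{it}{2}\Delta_\perp}$, the combination of the one-dimensional weighted bound of \cite{BKS,S} with the $(d-1)$-dimensional free decay via Minkowski (legitimate since $r\ge2\ge r'$), and the bookkeeping $1-\frac2r=2\theta(r)$, $\theta(r)=\frac{2}{qd}$, are all correct, and for $r_1=r_2$ your kernel-plus-Hardy--Littlewood--Sobolev argument is exactly the classical non-endpoint proof. The fatal step is the ``key off-diagonal dispersive bound'': for $r_1\neq r_2$ the fixed-time estimate $\|\langle\pt_{x_1}\rangle^{\theta(r_1)+\theta(r_2)}W(\tau)F\|_{L^{r_1}}\le C|\tau|^{-\frac{d}{2}(\theta(r_1)+\theta(r_2))}\|F\|_{L^{r_2'}}$ is false, and Stein interpolation cannot produce it. Every estimate at your disposal --- the $L^2$ isometry and the weighted diagonal bounds $L^{r'}\to L^{r}$ --- sits on the duality line $\frac1p+\frac1q=1$ of the $(\frac1p,\frac1q)$-square, and complex interpolation of the analytic family $z\mapsto\langle\pt_{x_1}\rangle^{2z}W(\tau)$ moves along segments between such points, hence never leaves that line; the target point $(\frac{1}{r_2'},\frac{1}{r_1})$ lies off it precisely when $r_1\neq r_2$. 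Moreover the bound is genuinely false, not merely unreachable: test it on tensors $F=f(x_1)g(x_\perp)$. Since the weight acts only in $x_1$, it would force a fixed-time estimate $\|e^{\frac{i\tau}{2}\Delta_\perp}g\|_{L^{r_1}(\rre^{d-1})}\le C(\tau)\|g\|_{L^{r_2'}(\rre^{d-1})}$ with $r_1\neq (r_2')'$, and such non-dual fixed-time bounds fail for the free Schr\"odinger group: for $r_1=2<r_2$ this contradicts unitarity on $L^2$ because $L^{r_2'}\not\subset L^2$, and the remaining cases fail by a spreading-Gaussian test (width $\sqrt{\varepsilon}$, $1\ll\varepsilon\ll\tau$) or by duality. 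This is exactly the reason why inhomogeneous Strichartz estimates with two different admissible pairs are never proved by a kernel bound plus HLS.

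The repair is standard and uses only pieces you already have. Your HLS computation is valid in the diagonal case $(q_1,r_1)=(q_2,r_2)$ with $2<q<\infty$; by $TT^*$ it yields the weighted homogeneous estimate $\|\langle\pt_{x_1}\rangle^{\frac{2}{qd}}W(t)\phi\|_{L^{q}_tL^{r}_x}\le C\|\phi\|_{L^2}$ and its dual. For mixed pairs, the untruncated operator factorizes through $L^2$, namely $\int_{\rre}W(t-t')F(t')\,dt'=W(t)\int_{\rre}W(-t')F(t')\,dt'$, so chaining the homogeneous estimate with its dual gives the untruncated mixed-pair bound, and the retarded truncation $\int_t^{+\infty}$ then follows from the Christ--Kiselev lemma when $q_1>q_2'$; at no point is an off-diagonal fixed-time bound needed. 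Even so, this route misses the double endpoint $q_1=q_2=2$ (there your HLS step degenerates, $\lambda=1$, and Christ--Kiselev is unavailable), which the lemma as stated includes for $d\ge3$; that case requires the Keel--Tao dyadic bilinear interpolation applied directly to the retarded form. Note finally that the paper does not prove Lemma~\ref{S} at all: its proof is a citation of \cite{KPV} for the non-endpoint cases and of \cite{KT} for the endpoint, so a self-contained argument would have to follow essentially the factorization-plus-Christ--Kiselev route just described, or feed your (correct) weighted diagonal dispersive estimate into the abstract theorem of \cite{KT}.
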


\begin{proof}[Proof of Lemma \ref{S}.] For the non-endpoint case 
(i.e., $2\le r_{j}\le2d/(d-2)$ for any $j=1,2$), 
see Kenig, Ponce and Vega \cite[Theorem 3.1]{KPV}. 
For the endpoint case 
(i.e., $2\le r_{j}\le2d/(d-2)$ for some $j=1,2$), 
see Keel and Tao \cite[Theorem 10.2]{KT}. 
\end{proof}

\section{Nonlinear Estimates} \label{sec:linear}

In this section, we derive several estimates 
for the asymptotic profile \eqref{u}. 

\begin{proposition}\label{nl1} 
Assume $d=2,3$
and let $S_{+}$ be given by (\ref{u}). Then 
if $1<s<2$ and $s<p<3$, then we have for $t\ge1$,
\begin{eqnarray*}
\|\hat{\psi}_{+}e^{iS_{+}(t,\xi)}\|_{H^{s}}
&\le& C
(1+\|\hat{\psi}_{+}\|_{L^{\infty}}^{2p-2})\|
\psi_{+}\|_{H^{0,s}},\\
\lefteqn{\|\langle\sqrt{3}\xi_{1}\rangle^{-p+1}
|\hat{\psi}_{+}|^{p-1}\hat{\psi}_{+}e^{iS_{+}(t,\xi)}\|_{H^{s}}}
\qquad\qquad\qquad\ \ \\
&\le&C
(1+\|\hat{\psi}_{+}\|_{L^{\infty}}^{2p-2})
\|\hat{\psi}_{+}\|_{L^{\infty}}^{p-1}
\|\psi_{+}\|_{H^{0,s}}.
\end{eqnarray*}
If $2\le s<p<3$, then we have for $t\ge1$,
\begin{eqnarray*}
\|\hat{\psi}_{+}e^{iS_{+}(t,\xi)}\|_{H^{s}}
&\le&
C(1+\|\hat{\psi}_{+}\|_{L^{\infty}}^{3p-3})
\|{\psi}_{+}\|_{H^{0,s}},\\
\lefteqn{\|\langle\sqrt{3}\xi_{1}\rangle^{-p+1}
|\hat{\psi}_{+}|^{p-1}\hat{\psi}_{+}e^{iS_{+}(t,\xi)}\|_{H^{s}}}
\qquad\qquad\qquad\ \ \\
&\le&
C(1+\|\hat{\psi}_{+}\|_{L^{\infty}}^{3p-3})
\|\hat{\psi}_{+}\|_{L^{\infty}}^{p-1}
\|\psi_{+}\|_{H^{0,s}}.
\end{eqnarray*}
In particular, if we further assume $s>d/2$, then we have
\begin{eqnarray*}
\|\hat{\psi}_{+}e^{iS_{+}(t,\xi)}\|_{H^{s}}
&\le& P(\|\psi_{+}\|_{H^{0,s}}),\\
\|\langle\sqrt{3}\xi_{1}\rangle^{-\frac{p-1}{2}}
|\hat{\psi}_{+}|^{p-1}\hat{\psi}_{+}e^{iS_{+}(t,\xi)}\|_{H^{s}}
&\le& P(\|\psi_{+}\|_{H^{0,s}}),
\end{eqnarray*}
where $P$ is a polynomial without constant term. 
\end{proposition}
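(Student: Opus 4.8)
The plan is to move everything onto the frequency side and reduce the weighted bound to a standard nonlinear estimate in $H^{s}(\R^{d})$. Writing $u=\hat{\psi}_{+}$, the Fourier transform sends multiplication by the weight $\langle x\rangle^{s}$ to the Bessel potential $\langle\nabla_{\xi}\rangle^{s}$, so that
\[
\|\psi_{+}\|_{H^{0,s}}=\|\langle x\rangle^{s}\psi_{+}\|_{L^{2}}=\|\langle\nabla_{\xi}\rangle^{s}u\|_{L^{2}}=\|u\|_{H^{s}} .
\]
Thus the task is to control $\|u\,e^{iS_{+}}\|_{H^{s}}$ and $\|\langle\sqrt{3}\xi_{1}\rangle^{-p+1}|u|^{p-1}u\,e^{iS_{+}}\|_{H^{s}}$ in terms of $\|u\|_{H^{s}}$ and $\|u\|_{L^{\infty}}$. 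Since $\lambda$ is real and $1-\frac{p-1}{2}d<0$ in our range, the phase $S_{+}(t,\xi)=c(t)\,m(\xi)\,|u(\xi)|^{p-1}$ is real, with $c(t)=-\lambda t^{1-\frac{p-1}{2}d}/(1-\frac{p-1}{2}d)$ bounded for $t\ge1$ and $m(\xi)=(3\xi_{1}^{2}+1)^{-(p-1)/2}$ smooth and bounded together with all its derivatives. In particular $\|e^{iS_{+}}\|_{L^{\infty}}=1$ and $\|S_{+}\|_{L^{\infty}}\le C\|u\|_{L^{\infty}}^{p-1}$; this last fact is the origin of the powers of $\|\hat{\psi}_{+}\|_{L^{\infty}}$ in the statement.

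The two tools I would use are the Kato--Ponce fractional Leibniz inequality
\[
\|\langle\nabla\rangle^{s}(fg)\|_{L^{2}}\le C\bigl(\|\langle\nabla\rangle^{s}f\|_{L^{2}}\|g\|_{L^{\infty}}+\|f\|_{L^{\infty}}\|\langle\nabla\rangle^{s}g\|_{L^{2}}\bigr),
\]
in its various $L^{p_{1}}\times L^{q_{1}}$ forms, and the fractional chain rule (Christ--Weinstein)
\[
\||\nabla|^{\sigma}F(w)\|_{L^{r}}\le C\|F'(w)\|_{L^{p_{1}}}\||\nabla|^{\sigma}w\|_{L^{q_{1}}},\qquad \tfrac1r=\tfrac1{p_{1}}+\tfrac1{q_{1}}\ (0<\sigma<1),
\]
applied to $F=e^{i\,\cdot}$, for which $|F'|\equiv1$. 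For the first estimate I would apply Leibniz to the product $u\cdot e^{iS_{+}}$: the term in which all smoothness falls on $u$ gives $\|u\|_{H^{s}}\|e^{iS_{+}}\|_{L^{\infty}}=\|u\|_{H^{s}}$ (this is the ``$1$'' in the prefactor), while the remaining terms require $\||\nabla|^{s}e^{iS_{+}}\|$, which by the chain rule is governed by $\||\nabla|^{s}S_{+}\|$; since $m$ is a bounded smooth multiplier, this reduces to estimating $\||\nabla|^{s}(|u|^{p-1})\|$ in a suitable space. The second estimate is handled in exactly the same way after isolating the extra bounded factor $\langle\sqrt{3}\xi_{1}\rangle^{-(p-1)}|u|^{p-1}$, whose $L^{\infty}$ size $\|u\|_{L^{\infty}}^{p-1}$ supplies the additional $\|\hat{\psi}_{+}\|_{L^{\infty}}^{p-1}$.

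The proof splits at $s=2$ precisely because of the limited smoothness of $z\mapsto|z|^{p-1}$ when $p<3$: its first derivative behaves like $|z|^{p-2}$, which is bounded since $p>2$, whereas its second derivative behaves like $|z|^{p-3}$, which is singular at the origin for $p<3$, so two classical derivatives are unavailable. For $1<s<2$ I would write $s=1+\sigma$ with $0<\sigma<1$, differentiate once by the product rule — $\nabla(u\,e^{iS_{+}})=(\nabla u)e^{iS_{+}}+u\,(i\nabla S_{+})e^{iS_{+}}$ with $\nabla S_{+}\sim m'|u|^{p-1}+m\,|u|^{p-2}\nabla u$ — and then apply $|\nabla|^{\sigma}$ together with the chain rule to each piece, always estimating the copies of $|u|^{p-1}$ or $|u|^{p-2}$ in $L^{\infty}$ and carrying the remaining smoothness on $\nabla u\in H^{s-1}$. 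Counting the copies of $|u|^{p-1}$ produced each time a derivative lands on the phase (at most two in this regime) yields the prefactor $1+\|u\|_{L^{\infty}}^{2p-2}$. For $2\le s<3$ one differentiates twice; now up to three copies of $|u|^{p-1}$ can be generated, producing $1+\|u\|_{L^{\infty}}^{3p-3}$, and the genuinely delicate term is the one in which both derivatives strike $|u|^{p-1}$, namely $\sim|u|^{p-3}(\nabla u)^{2}$.

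The main obstacle is exactly the control of such $|\nabla|^{s-2}\bigl(|u|^{p-3}(\nabla u)^{2}\bigr)$-type contributions in the range $2\le s<p<3$: because $|z|^{p-3}$ is unbounded near the origin it cannot simply be extracted in $L^{\infty}$, so the fractional bookkeeping must be arranged so that the singular power is always paired with vanishing factors of $u$ (using $p>2$, which keeps the product $|u|^{p-2}$ bounded) while the leftover fractional smoothness is carried by $\nabla u\in H^{s-1}$ through a Gagliardo seminorm / commutator estimate rather than a classical second derivative. Once these nonlinear estimates are established, the final ``in particular'' statement is immediate: for $s>d/2$ the Sobolev embedding $H^{s}(\R^{d})\hookrightarrow L^{\infty}(\R^{d})$ gives $\|\hat{\psi}_{+}\|_{L^{\infty}}\le C\|u\|_{H^{s}}=C\|\psi_{+}\|_{H^{0,s}}$, and inserting this into the bounds just proved renders each right-hand side a polynomial in $\|\psi_{+}\|_{H^{0,s}}$ with no constant term.
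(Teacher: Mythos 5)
Your overall reduction matches the paper's in outline (Plancherel to convert $\|\psi_{+}\|_{H^{0,s}}$ into $\|\hat{\psi}_{+}\|_{H^{s}}$, fractional Leibniz plus chain rules, a split at $s=2$, Sobolev embedding for the ``in particular'' part), but there is a genuine gap in the regime $1<s<2$: you assert that the first derivative of $z\mapsto|z|^{p-1}$ ``behaves like $|z|^{p-2}$, which is bounded since $p>2$'', and your whole scheme there rests on extracting $|u|^{p-2}$ (from $\nabla S_{+}\sim m\,|u|^{p-2}\nabla u$) in $L^{\infty}$. The proposition, however, only assumes $1<s<2$ and $s<p<3$, so $p$ may lie in $(1,2)$ --- and this subcase is actually needed: in Theorem \ref{nonlinear2} ($d=3$) one has $9/5<p<7/3$ and $p-1/6<s<p$, so for $p<2$ the relevant parameters satisfy $1<s<p<2$. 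In that range $|u|^{p-2}$ is unbounded near the zeros of $u$, the phase $S_{+}=c(t)m(\xi)|u|^{p-1}$ is only $C^{0,p-1}$ as a function of $u$, and the $C^{1}$ Christ--Weinstein chain rule you cite (your only chain-rule tool) is inapplicable. The missing ingredient is the fractional chain rule for H\"older-continuous compositions (the paper's Lemma \ref{ff}(ii), due to Visan), applied with $\alpha=p-1$ and an auxiliary exponent $\sigma\in\bigl((s-1)/(p-1),1\bigr)$ to obtain $\||\nabla|^{s-1}e^{iS(u)}\|_{L^{2s/(s-1)}}\le C\|u\|_{L^{\infty}}^{p-2+1/s}\|u\|_{\dot{H}^{s}}^{1-1/s}$; this is precisely where the hypothesis $s<p$ is consumed, which your sketch never accounts for. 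The paper also sidesteps your problematic pointwise factor by keeping the bounded combination $S'(u)u\sim|u|^{p-1}$ together instead of splitting off $|u|^{p-2}$.

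The same omission resurfaces in your $2\le s<p<3$ case: you correctly identify $|u|^{p-3}(\nabla u)^{2}$ as the delicate contribution and propose pairing the singular power with factors of $u$, but even after grouping into bounded quantities (as the paper does with $S'(u)u$, $S''(u)u\sim|u|^{p-2}$, $S'(u)^{2}u$) one must still apply $|\nabla|^{s-2}$ to compositions such as $S'(u)e^{iS(u)}$ with $S'(u)\sim|u|^{p-2}$ merely of class $C^{0,p-2}$; this again requires the H\"older chain rule with $s-2<p-2$ (i.e.\ $s<p$), not the unspecified ``Gagliardo seminorm / commutator estimate'' you invoke. As a side remark, your treatment of the weights --- absorbing $\langle\sqrt{3}\xi_{1}\rangle^{-(p-1)}$ and $m(\xi)$ as smooth bounded multipliers --- is a legitimate simplification of the paper's route, which instead substitutes $u=\langle\sqrt{3}\xi_{1}\rangle^{-1}\hat{\psi}_{+}$ into Lemma \ref{nl} and controls the commutators $[\nabla,\langle\sqrt{3}\xi_{1}\rangle]$ and $[\Delta,\langle\sqrt{3}\xi_{1}\rangle]$ by zeroth- and first-order pseudodifferential calculus; but that simplification does not repair the chain-rule gap above.
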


To prove Proposition \ref{nl1}, we employ the 
Leibniz and chain rules for the fractional derivatives. 

\begin{lemma}[Fractional Leibniz rule]\label{pp} 
Assume that $s \ge 0$. 
Let $1<p,p_1,p_{4}<\infty$ and $1<p_{2},p_{3}\le\infty$.
Then, we have
\begin{eqnarray*}
\||\nabla|^{s}(fg)\|_{L_{x}^{p}} 
\le C(\||\nabla|^{s}f\|_{L_{x}^{p_{1}}}
\|g\|_{L_{x}^{p_{2}}}
+ \|f\|_{L_{x}^{p_{3}}}
\||\nabla|^{s} g\|_{L_{x}^{p_{4}}}),
\end{eqnarray*}
provided that 
$1/p=1/p_{1}+1/p_{2}=1/p_{3}+1/p_{4}$.
\end{lemma}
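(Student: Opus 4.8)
The plan is to prove this by Littlewood--Paley theory, reducing it to the $L^p$-boundedness of the Hardy--Littlewood maximal function $M$ together with the square-function characterization of $L^p$ for $1<p<\infty$. First I would fix a standard dyadic partition of unity, writing $\Delta_j$ for the frequency projection onto the annulus $\{|\xi|\sim 2^{j}\}$ and $S_j=\sum_{k\le j}\Delta_k$ for the associated low-pass projection, so that $f=\sum_j\Delta_j f$ and likewise for $g$. Via Bony's paraproduct decomposition I would split the product into
\begin{equation*}
fg=\sum_{k}(S_{k-2}f)(\Delta_k g)+\sum_{k}(\Delta_k f)(S_{k-2}g)+\sum_{|j-k|\le 1}(\Delta_j f)(\Delta_k g)=:\Pi_1+\Pi_2+\Pi_3,
\end{equation*}
and then bound $\||\nabla|^s\Pi_i\|_{L^p}$ for each piece, matching $\Pi_2$ (derivative on $f$) to the first term on the right-hand side and $\Pi_1,\Pi_3$ (derivative on $g$) to the second; the index constraints $1/p=1/p_1+1/p_2=1/p_3+1/p_4$ are precisely what the two off-diagonal paraproducts require.

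For the off-diagonal pieces the decisive feature is that each summand of $\Pi_1$ and $\Pi_2$ is frequency-localized to a single annulus: in $\Pi_1$ the factor $S_{k-2}f$ lives at frequencies $\lesssim 2^{k-2}$ while $\Delta_k g$ lives at $\sim 2^k$, so the product sits at $|\xi|\sim 2^k$ and $|\nabla|^s$ acts on it essentially as $2^{ks}$ times a fixed bump. The square-function estimate for sums of annulus-localized functions then gives
\begin{equation*}
\||\nabla|^s\Pi_1\|_{L^p}\le C\Bigl\|\Bigl(\sum_k |S_{k-2}f|^2\,\bigl||\nabla|^s\Delta_k g\bigr|^2\Bigr)^{1/2}\Bigr\|_{L^p}.
\end{equation*}
Bounding $|S_{k-2}f|\le C\,Mf$ pointwise and uniformly in $k$, pulling $Mf$ out of the sum, and applying H\"older with $1/p=1/p_3+1/p_4$ together with the $L^{p_3}$-boundedness of $M$ (valid for the full range $1<p_3\le\infty$, trivially so when $p_3=\infty$) and the Littlewood--Paley inequality for $|\nabla|^s g$ in $L^{p_4}$, $1<p_4<\infty$, yields $\||\nabla|^s\Pi_1\|_{L^p}\le C\|f\|_{L^{p_3}}\||\nabla|^s g\|_{L^{p_4}}$. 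The term $\Pi_2$ is handled symmetrically and produces $C\||\nabla|^s f\|_{L^{p_1}}\|g\|_{L^{p_2}}$, the hypotheses $1<p_1<\infty$ and $1<p_2\le\infty$ fitting the same mechanism.

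The main obstacle is the resonant term $\Pi_3$, where the two factors have comparable frequencies: their product is supported only in a ball $\{|\xi|\lesssim 2^k\}$ rather than an annulus, so $|\nabla|^s$ no longer localizes cleanly scale by scale. Here I would insert an output projection, writing $|\nabla|^s\Pi_3=\sum_\ell |\nabla|^s\Delta_\ell\Pi_3$ with only the scales $2^\ell\lesssim 2^k$ contributing, and then use $s\ge 0$ to sum the factors $2^{\ell s}$ against $2^{ks}$ by a geometric (Schur-type) argument, placing the derivative on whichever of $\Delta_j f$, $\Delta_k g$ is convenient. Combining this with the same maximal-function and square-function estimates as above gives the bound $C\|f\|_{L^{p_3}}\||\nabla|^s g\|_{L^{p_4}}$, and collecting the three contributions completes the proof. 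I note that this is a classical fractional Leibniz (Kato--Ponce) inequality, so one could alternatively invoke it directly from Kenig--Ponce--Vega or the subsequent literature; the paraproduct argument above is the self-contained route I would write out.
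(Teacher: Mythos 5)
Your proposal is correct in substance but takes a genuinely different route from the paper: the paper does not prove Lemma \ref{pp} at all, it disposes of it with a one-line citation to Christ--Weinstein \cite{CW} (their Proposition 3.3), this being a classical Kato--Ponce-type inequality. Your paraproduct argument is essentially the modern standard proof of that estimate, and its architecture is sound: the two off-diagonal pieces go exactly as you say, with the endpoints $p_2,p_3\le\infty$ entering only through the (trivial at $\infty$) boundedness of the maximal function on the lone factor, while the Littlewood--Paley square-function characterization is what forces $1<p_1,p_4<\infty$ on the factors carrying $|\nabla|^s$ --- so your version has the merit of making visible exactly where each exponent restriction in the hypotheses comes from, at the cost of length compared with the paper's citation. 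Two points deserve care if you write it out in full. First, the geometric summation you invoke for the resonant piece $\Pi_3$ rests on the convergence of $\sum_{\ell\le k+O(1)}2^{(\ell-k)s}$, which holds only for $s>0$; at $s=0$ there is no decay and the Schur argument fails, but the lemma at $s=0$ is just H\"older's inequality, so you should split off that endpoint rather than claim the summation works for all $s\ge 0$. Second, both in $\Pi_3$ and when moving $|\nabla|^s$ onto $\Delta_k g$ inside your displayed square-function bound for $\Pi_1$, the $k$-dependent smooth multipliers produced by the output localization are not handled by the single pointwise bound $|S_{k-2}f|\le C\,Mf$; one needs the Fefferman--Stein vector-valued maximal inequality (legitimate here since $1<p<\infty$), and this hidden ingredient should be stated explicitly. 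With those two repairs your self-contained proof is complete, and as you note, it reproduces what the paper instead imports wholesale from \cite{CW}.
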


\begin{proof}[Proof of Lemma \ref{pp}.] 
See \cite[Proposition 3.3]{CW} for instance. 
\end{proof}

\begin{lemma}[Fractional chain rules]\label{ff} 

\vskip1mm
\noindent
(i) Assume $G\in C^{1}({\mathbb C},{\mathbb C})$ 
and $0<s\le1$. Let $1<p,p_{2}<\infty$ and 
$1<p_{1}\le\infty$. 
Then we have 
\begin{eqnarray*}
\||\nabla|^{s}G(u)\|_{L^{p}}
\le C\|G'(u)\|_{L^{p_{1}}}\||\nabla|^{s}u\|_{L^{p_{2}}},
\end{eqnarray*}
provided $1/p=1/p_{1}+1/p_{2}$.

\vskip1mm
\noindent
(ii) Assume $G\in C^{0,\alpha}({\mathbb C},{\mathbb C})$ 
for some $0<\alpha<1$. Then for any $0<s<\alpha, 
1<p<\infty$ and $s/\alpha<\sigma<1$, we have
\begin{eqnarray*}
\||\nabla|^{s}G(u)\|_{L^{p}}
\le C\||u|^{\alpha-\frac{s}{\sigma}}\|_{L^{p_{1}}}
\||\nabla|^{\sigma}u\|_{L^{\frac{s}{\sigma}p_{2}}}^{\frac{s}{\sigma}},
\end{eqnarray*}
provided that $1/p=1/p_{1}+1/p_{2}$ and $(1-s/(\alpha\sigma))p_{1}>1$

\end{lemma}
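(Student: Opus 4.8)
The plan is to deduce both estimates from a frequency‑localized (Littlewood–Paley) representation of $|\nabla|^{s}$, exploiting the fact that for $N\ge1$ the projection $P_{N}$ has a mean‑zero kernel $k_{N}(y)=N^{d}k(Ny)$, so that
\begin{eqnarray*}
P_{N}G(u)(x)=\int_{\rre^{d}}k_{N}(y)\bigl(G(u(x-y))-G(u(x))\bigr)\,dy.
\end{eqnarray*}
Both statements are classical; part (i) is the Christ--Weinstein fractional chain rule (see \cite{CW}) and part (ii) its extension to merely $\alpha$‑H\"older nonlinearities, so one option is simply to quote the literature. I nevertheless indicate how the two bounds arise, since the difficulty is concentrated entirely in (ii).

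For part (i) the mean value theorem gives the pointwise bound
\begin{eqnarray*}
|G(u(x-y))-G(u(x))|\le\Bigl(\int_{0}^{1}|G'(u(x)+\theta(u(x-y)-u(x)))|\,d\theta\Bigr)|u(x-y)-u(x)|.
\end{eqnarray*}
Substituting this into the formula for $P_{N}G(u)$, summing over $N$ with weights $N^{s}$, passing to the associated square function, and applying H\"older's inequality in $x$ with exponents $p_{1},p_{2}$ separates the factor $\|G'(u)\|_{L^{p_{1}}}$ from the square function of $u$, the latter being comparable to $\||\nabla|^{s}u\|_{L^{p_{2}}}$. This yields the claim for $0<s<1$; the endpoint $s=1$ is the ordinary chain rule combined with H\"older's inequality.

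The real work is in (ii), where $G'$ need not exist and one has only $|G(a)-G(b)|\le C|a-b|^{\alpha}$. Inserting this into the expression for $P_{N}G(u)$ and writing
\begin{eqnarray*}
|u(x-y)-u(x)|^{\alpha}
=|u(x-y)-u(x)|^{\alpha-\frac{s}{\sigma}}\,|u(x-y)-u(x)|^{\frac{s}{\sigma}},
\end{eqnarray*}
I would bound the first factor pointwise by $(|u(x-y)|+|u(x)|)^{\alpha-\frac{s}{\sigma}}$, which is licit since $0<\alpha-s/\sigma<1$ once $s/\alpha<\sigma<1$, and estimate the second factor, together with the kernel $k_{N}$, through the $\sigma$‑regularity of $u$, so that after summing in $N$ and forming the square function the factor $\||\nabla|^{\sigma}u\|^{s/\sigma}$ emerges with exponent $s/\sigma\in(0,1)$. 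A final H\"older split with exponents $p_{1},p_{2}$ then produces the stated product, and the auxiliary hypothesis $(1-s/(\alpha\sigma))p_{1}>1$ is what guarantees that the power $|u|^{\alpha-s/\sigma}$ is integrable to exponent $p_{1}$, so that this last splitting closes. The main obstacle is thus the balancing in (ii): one must interpolate between the H\"older continuity of $G$ and the fractional smoothness of $u$ through the intermediate parameter $\sigma$, keeping every exponent in its admissible range; once this is arranged, the estimate collapses, exactly as in (i), to a single application of H\"older's inequality.
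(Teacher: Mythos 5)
Your proposal matches the paper: the paper's entire proof of this lemma is a bare citation --- \cite[Proposition 3.1]{CW} for (i) and \cite[Proposition A.1]{V} for (ii) --- and you correctly identify that quoting the literature suffices, naming Christ--Weinstein for (i) and its extension to H\"older-continuous nonlinearities (Visan's thesis) for (ii), with your Littlewood--Paley sketch being a fair outline of the standard arguments behind those references. One small caveat on the sketch: the hypothesis $(1-s/(\alpha\sigma))p_{1}>1$ is not there to make $\||u|^{\alpha-\frac{s}{\sigma}}\|_{L^{p_{1}}}$ finite (that is simply an assumption on the right-hand side); its actual role in Visan's proof is to place the Hardy--Littlewood maximal function estimates at a Lebesgue exponent strictly greater than $1$, where $M$ is bounded --- and similarly at $s=1$ in (i) one should pass from $|\nabla|$ to $\nabla$ via Riesz-transform boundedness on $L^{p}$, $1<p<\infty$, before invoking the ordinary chain rule.
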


\begin{proof}[Proof of Lemma \ref{ff}.] 
For the proofs for (i) and (ii), see \cite[Proposition 3.1]{CW} 
and \cite[Proposition A.1]{V}, respectively. 
\end{proof}

\begin{lemma}\label{nl}
Let $f(u)=|u|^{q-1}u$, $q=1$ or $p$ and $S(u)=\mu|u|^{p-1}t^{-\delta}$. 
If $1<s<2$ and $s<p<3$, then we have for $t\ge1$,
\begin{eqnarray*}
\lefteqn{\|\langle\sqrt{3}\xi_{1}\rangle f(u)e^{iS(u)}\|_{H^{s}}}\\
&\le& C\left\{
\begin{array}{l}
\medskip\displaystyle{
(1+\|\langle\sqrt{3}\xi_{1}\rangle 
u\|_{L^{\infty}}^{2p-2})\|\langle\sqrt{3}\xi_{1}\rangle
u\|_{H^{s}}
\qquad \text{for}\ q=1,}\\
\displaystyle{
(1+\|\langle\sqrt{3}\xi_{1}\rangle u\|_{L^{\infty}}^{2p-2})
\|\langle\sqrt{3}\xi_{1}\rangle u\|_{L^{\infty}}^{p-1}
\|\langle\sqrt{3}\xi_{1}\rangle u\|_{H^{s}}
\qquad \text{for}\ q=p.}
\end{array}
\right.
\end{eqnarray*}
If $2\le s<p<3$, then we have for $t\ge1$,
\begin{eqnarray*}
\lefteqn{\|\langle\sqrt{3}\xi_{1}\rangle f(u)e^{iS(u)}\|_{H^{s}}}\\
&\le& C\left\{
\begin{array}{l}
\medskip\displaystyle{
(1+\|\langle\sqrt{3}\xi_{1}\rangle u\|_{L^{\infty}}^{3p-3})
\|\langle\sqrt{3}\xi_{1}\rangle u\|_{H^{s}}
\qquad \text{for}\ q=1,}\\
\displaystyle{
(1+\|\langle\sqrt{3}\xi_{1}\rangle u\|_{L^{\infty}}^{3p-3})
\|\langle\sqrt{3}\xi_{1}\rangle u\|_{L^{\infty}}^{p-1}
\|\langle\sqrt{3}\xi_{1}\rangle u\|_{H^{s}}
\qquad \text{for}\ q=p.}
\end{array}
\right.
\end{eqnarray*}
\end{lemma}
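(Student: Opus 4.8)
The plan is to reduce everything to the single weighted quantity $v:=\langle\sqrt{3}\xi_{1}\rangle u$ and then to estimate $\|v\,|u|^{q-1}e^{iS(u)}\|_{H^{s}}$ by distributing the derivatives over the three factors with the fractional Leibniz rule (Lemma \ref{pp}) and differentiating the composite functions $f(u)=|u|^{q-1}u$ and $e^{iS(u)}$ with the fractional chain rules (Lemma \ref{ff}). First I would record the elementary facts that $|u|\le|v|$ pointwise (since $\langle\sqrt{3}\xi_{1}\rangle\ge1$) and that multiplication by $\langle\sqrt{3}\xi_{1}\rangle^{-1}$, being a smooth function with bounded derivatives, is bounded on $H^{s}$ and on $L^{\infty}$; hence any occurrence of $\|u\|_{L^{\infty}}$, $\|u\|_{H^{s}}$ or $\||\nabla|^{\sigma}u\|$ may be replaced by the corresponding norm of $v$. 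I would also use $\|\cdot\|_{H^{s}}\sim\|\cdot\|_{L^{2}}+\||\nabla|^{s}\cdot\|_{L^{2}}$; the $L^{2}$ part is immediate from H\"older's inequality together with $|e^{iS}|=1$ and $|u|\le|v|$, and produces the term $\|\langle\sqrt{3}\xi_{1}\rangle u\|_{L^{\infty}}^{p-1}\|\langle\sqrt{3}\xi_{1}\rangle u\|_{H^{s}}$ in the $q=p$ case (and simply $\|\langle\sqrt{3}\xi_{1}\rangle u\|_{H^{s}}$ when $q=1$).

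For the homogeneous part $\||\nabla|^{s}(v\,|u|^{q-1}e^{iS})\|_{L^{2}}$ I would peel off the integer part of $s$ — one derivative when $1<s<2$ and two when $2\le s<3$ — and use Lemma \ref{pp} to spread $|\nabla|^{s}$ across the factors. The derivatives that fall on $f(u)=|u|^{q-1}u$ are harmless, since $f\in C^{1}$ for every $p>1$, so Lemma \ref{ff}(i) applies and contributes the factor $\|v\|_{L^{\infty}}^{p-1}$ (only when $q=p$). The derivatives that fall on the phase are the delicate ones: each one produces a factor $\nabla S\sim t^{-\delta}|u|^{p-2}\nabla u$ of homogeneity $p-1$ in $u$, so that differentiating $e^{iS}$ to total order $s$ produces at most $\lfloor s\rfloor+1$ such factors across the admissible range, collecting $\|v\|_{L^{\infty}}^{2p-2}$ for $1<s<2$ and $\|v\|_{L^{\infty}}^{3p-3}$ for $2\le s<3$, while the remaining regularity is always carried by one factor $\|v\|_{H^{s}}$.

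The residual fractional derivative, of order $s-\lfloor s\rfloor\in(0,1)$, of the phase is handled by the chain rules: when $p\ge2$ (which is automatic in the range $2\le s<p<3$) the map $z\mapsto e^{iS(z)}$ is $C^{1}$ and Lemma \ref{ff}(i) suffices, while for $1<p<2$ the phase is only H\"older continuous of order $\alpha=p-1<1$ and I would invoke Lemma \ref{ff}(ii). The hypotheses are used sharply precisely here: Lemma \ref{ff}(ii) requires the order of differentiation to be strictly below the H\"older exponent, and the assumption $s<p$ forces the residual order $s-\lfloor s\rfloor<p-1=\alpha$, so the H\"older chain rule is legitimately applicable; one then chooses the auxiliary exponent $\sigma$ in Lemma \ref{ff}(ii) and the H\"older pairs in Lemma \ref{pp} so that every factor appearing is either $\|v\|_{L^{\infty}}$ or $\|v\|_{H^{s}}$.

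I expect the main obstacle to be exactly this low-power regime $1<p<2$ (relevant only to the first case $1<s<2$), where neither $|u|^{p-1}$ alone nor $e^{iS(u)}$ is $C^{1}$: one must keep the extra $u$ attached to $|u|^{p-1}$ inside $f$ to retain $C^{1}$ regularity, differentiate the phase via the H\"older chain rule, and carry out the exponent bookkeeping so that no uncontrolled quantity — in particular no $\|\nabla u\|_{L^{\infty}}$ — survives. Once the residual fractional order is safely below $p-1$ and the intermediate Lebesgue exponents are balanced (taking $L^{\infty}$ on the low-regularity factors and $L^{2}$ on the factor carrying $|\nabla|^{s}$), collecting the integer- and fractional-derivative contributions and summing over the Leibniz terms yields the stated bounds, the two cases differing only in the admissible number of phase factors, hence in the power $2p-2$ versus $3p-3$.
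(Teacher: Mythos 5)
Your proposal is correct and follows essentially the same route as the paper's proof: split off the integer part of $s$ (one gradient for $1<s<2$, a Laplacian for $2\le s<3$), distribute derivatives via the fractional Leibniz rule (Lemma \ref{pp}), handle the residual fractional order with the chain rules of Lemma \ref{ff} — part (ii) with H\"older exponent $\alpha=p-1$ precisely in the regime $1<p<2$, which is exactly where the hypothesis $s<p$ is consumed — and balance every factor by interpolation between $L^{\infty}$ and $\dot{H}^{s}$, the powers $2p-2$ and $3p-3$ arising from the count of phase-derivative factors just as you describe. The only cosmetic difference is bookkeeping of the weight: you pull it out using boundedness of multiplication by $\langle\sqrt{3}\xi_{1}\rangle^{-1}$ on $H^{s}$ and $L^{\infty}$, whereas the paper keeps the weight inside and estimates the commutators $[\nabla,\langle\sqrt{3}\xi_{1}\rangle]$ and $[\Delta,\langle\sqrt{3}\xi_{1}\rangle]$ as bounded operators of order zero and one.
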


\begin{proof}[Proof of Lemma \ref{nl}.] 
We consider the case $q=1$ only since the case 
$q=p$ being similar. 
An $L^{2}$ estimate for $\langle\sqrt{3}\xi_{1}\rangle f(u)e^{iS(u)}$ 
is trivial, so we consider $\dot{H}^{s}$ estimate. 

We first consider the case $1<s<p<2$. Since 
\begin{eqnarray*}
\nabla(\langle\sqrt{3}\xi_{1}\rangle ue^{iS(u)})
&=&\nabla(\langle\sqrt{3}\xi_{1}\rangle u)(1+iS'(u)u)e^{iS(u)}\\
& &-iS'(u)u([\nabla,\langle\sqrt{3}\xi_{1}\rangle]u)e^{iS(u)},
\end{eqnarray*}
we obtain
\begin{eqnarray*}
\lefteqn{\||\nabla|^{s}(\langle\sqrt{3}\xi_{1}\rangle ue^{iS(u)})\|_{L^{2}}}\\
&\simeq&\||\nabla|^{s-1}\nabla(\langle\sqrt{3}\xi_{1}\rangle ue^{iS(u)})\|_{L^{2}}\\
&\le&
C(\||\nabla|^{s-1}\{\nabla(\langle\sqrt{3}\xi_{1}\rangle u)
e^{iS(u)}\}\|_{L^{2}}
+\||\nabla|^{s-1}\{\nabla(\langle\sqrt{3}\xi_{1}\rangle u)
S'(u)u e^{iS(u)}\}\|_{L^{2}})\\
& &+C\||\nabla|^{s-1}\{
([\nabla,\langle\sqrt{3}\xi_{1}\rangle]u)S'(u)ue^{iS(u)}\}\|_{L^{2}}.
\end{eqnarray*}
By Lemma \ref{pp}, we have
\begin{eqnarray}
\lefteqn{\||\nabla|^{s}(\langle\sqrt{3}\xi_{1}\rangle ue^{iS(u)})\|_{L^{2}}
}\nonumber\\
&\le&
C(\||\nabla|^{s-1}e^{iS(u)}\|_{L^{\frac{2s}{s-1}}}
+\||\nabla|^{s-1}(S'(u)ue^{iS(u)})\|_{L^{\frac{2s}{s-1}}})
\nonumber\\
& &
\qquad\times(
\|\nabla(\langle\sqrt{3}\xi_{1}\rangle u)\|_{L^{2s}}
+
\|[\nabla,\langle\sqrt{3}\xi_{1}\rangle] u\|_{L^{2s}}
)\nonumber\\
& &+C(1+\|S'(u)u\|_{L^{\infty}})
(\|\langle\sqrt{3}\xi_{1}\rangle u\|_{\dot{H}^{s}}
+
\|[\nabla,\langle\sqrt{3}\xi_{1}\rangle ]u\|_{\dot{H}^{s-1}}
).\label{a1}
\end{eqnarray}
Lemma \ref{pp}, Lemma \ref{ff} (ii) and the interpolation inequality 
yield
\begin{eqnarray}
\||\nabla|^{s-1}e^{iS(u)}\|_{L^{\frac{2s}{s-1}}}
&\le&C\|u\|_{L^{\infty}}^{p-1-\frac{s-1}{\sigma}}
\||\nabla|^{\sigma}u
\|_{L^{\frac{2s}{\sigma}}}^{\frac{s-1}{\sigma}}
\nonumber\\
&\le&C\|u\|_{L^{\infty}}^{p-2+\frac{1}{s}}
\|u\|_{\dot{H}^{s}}^{1-\frac1s},
\nonumber\\
&\le&C\|\langle\sqrt{3}\xi_{1}\rangle u\|_{L^{\infty}}^{p-2+\frac{1}{s}}
\|\langle\sqrt{3}\xi_{1}\rangle u\|_{H^{s}}^{1-\frac1s},
\label{a2}
\end{eqnarray}
where $\sigma$ satisfies $(s-1)/(p-1)<\sigma<1$.
Lemma \ref{ff} and (\ref{a2}) imply 
\begin{eqnarray}
\lefteqn{\||\nabla|^{s-1}(S'(u)ue^{iS(u)})\|_{L^{\frac{2s}{s-1}}}}
\nonumber\\
&\le&
C(\||\nabla|^{s-1}(S'(u)u)\|_{L^{\frac{2s}{s-1}}}
+\|S'(u)u\|_{L^{\infty}}
\||\nabla|^{s-1}e^{iS(u)}\|_{L^{\frac{2s}{s-1}}})
\nonumber\\
&\le&C(1+\|u\|_{L^{\infty}}^{p-1})
\|u\|_{L^{\infty}}^{p-2+\frac{1}{s}}
\|u\|_{\dot{H}^{s}}^{1-\frac1s}
\nonumber\\
&\le&C(1+\|\langle\sqrt{3}\xi_{1}\rangle u\|_{L^{\infty}}^{p-1})
\|\langle\sqrt{3}\xi_{1}\rangle u\|_{L^{\infty}}^{p-2+\frac{1}{s}}
\|\langle\sqrt{3}\xi_{1}\rangle u\|_{H^{s}}^{1-\frac1s}.
\label{a3}
\end{eqnarray}
The interpolation inequality yields
\begin{eqnarray}
\|\nabla(\langle\sqrt{3}\xi_{1}\rangle u)\|_{L^{2s}}
\le C\|\langle\sqrt{3}\xi_{1}\rangle u\|_{L^{\infty}}^{1-\frac1s}
\|\langle\sqrt{3}\xi_{1}\rangle u\|_{\dot{H}^{s}}^{\frac1s}.
\label{a4}
\end{eqnarray}
Since $[\nabla,\langle\sqrt{3}\xi_{1}\rangle]=3\xi_{1}
\langle\sqrt{3}\xi_{1}\rangle^{-1}$ is a  
pseudo-differential operator of order zero, by the $L^{p}$ boundedness of 
pseudo-differential operator (see \cite[Chapter VI]{Stein} for instance) 
and the interpolation inequality, 
we obtain
\begin{eqnarray}
\|[\nabla,\langle\sqrt{3}\xi_{1}\rangle] u\|_{L^{2s}}
&\le& C\|u\|_{L^{2s}}\le C\|u\|_{L^{\infty}}^{1-\frac1s}
\|u\|_{\dot{H}^{s}}^{\frac1s},
\label{a5}\\
\|[\nabla,\langle\sqrt{3}\xi_{1}\rangle] u\|_{\dot{H}^{s-1}}
&\le& C\|u\|_{H^{s-1}}\le C\|u\|_{H^{s}}.
\label{a6}
\end{eqnarray}
Combining (\ref{a1}), (\ref{a2}), (\ref{a3}), 
(\ref{a4}), (\ref{a5}) and (\ref{a6}), 
we have 
\begin{eqnarray*}
\||\nabla|^{s}(\langle\sqrt{3}\xi_{1}\rangle ue^{iS(u)})\|_{L^{2}}
\le C(1+\|\langle\sqrt{3}\xi_{1}\rangle u\|_{L^{\infty}}^{2p-2})
\|\langle\sqrt{3}\xi_{1}\rangle u\|_{H^{s}}.
\end{eqnarray*}

Next we consider the case $1<s<2\le p$. 
As in the previous case, 
we obtain (\ref{a1}).
Lemma \ref{pp}, Lemma \ref{ff} (i) and the interpolation inequality 
yield
\begin{eqnarray}
\||\nabla|^{s-1}e^{iS(u)}\|_{L^{\frac{2s}{s-1}}}
&\le&C\|u\|_{L^{\infty}}^{p-2}
\||\nabla|^{s-1}u
\|_{L^{\frac{2s}{s-1}}}
\nonumber\\
&\le&C\|u\|_{L^{\infty}}^{p-2+\frac{1}{s}}
\|u\|_{\dot{H}^{s}}^{1-\frac1s}
\nonumber\\
&\le&C\|\langle\sqrt{3}\xi_{1}\rangle u\|_{L^{\infty}}^{p-2+\frac{1}{s}}
\|\langle\sqrt{3}\xi_{1}\rangle u\|_{H^{s}}^{1-\frac1s}.
\label{b2}
\end{eqnarray}
Lemma \ref{ff} and (\ref{b2}) imply 
\begin{eqnarray}
\lefteqn{\||\nabla|^{s-1}(S'(u)ue^{iS(u)})\|_{L^{\frac{2s}{s-1}}}}
\nonumber\\
&\le&
C(\||\nabla|^{s-1}(S'(u)u)\|_{L^{\frac{2s}{s-1}}}
+\|S'(u)u\|_{L^{\infty}}
\||\nabla|^{s-1}e^{iS(u)}\|_{L^{\frac{2s}{s-1}}})
\nonumber\\
&\le&C(1+\|u\|_{L^{\infty}}^{p-1})
\|u\|_{L^{\infty}}^{p-2+\frac{1}{s}}
\|u\|_{\dot{H}^{s}}^{1-\frac1s}
\nonumber\\
&\le&C(1+\|\langle\sqrt{3}\xi_{1}\rangle u\|_{L^{\infty}}^{p-1})
\|\langle\sqrt{3}\xi_{1}\rangle u\|_{L^{\infty}}^{p-2+\frac{1}{s}}
\|\langle\sqrt{3}\xi_{1}\rangle u\|_{H^{s}}^{1-\frac1s}.
\label{b3}
\end{eqnarray}
Combining (\ref{a1}), (\ref{a4}), (\ref{a5}), 
(\ref{a6}), (\ref{b2}) and (\ref{b3}), we have 
\begin{eqnarray*}
\||\nabla|^{s}(\langle\sqrt{3}\xi_{1}\rangle ue^{iS(u)})\|_{L^{2}}
\le C(1+\|\langle\sqrt{3}\xi_{1}\rangle u\|_{L^{\infty}}^{2p-2})
\|\langle\sqrt{3}\xi_{1}\rangle u\|_{H^{s}}.
\end{eqnarray*}

Finally let us consider the case $2\le s<p<3$. 
Since 
\begin{eqnarray*}
\lefteqn{\Delta(\langle\sqrt{3}\xi_{1}\rangle ue^{iS(u)})}\\
&=&\Delta (\langle\sqrt{3}\xi_{1}\rangle u)(1+iS'(u)u)e^{iS(u)}\\
& &+\nabla u\cdot \nabla(\langle\sqrt{3}\xi_{1}\rangle u)
(2iS'(u)+iS''(u)u-S'(u)^{2}u)e^{iS(u)},\\
& &-iS'(u)u([\Delta,\langle\sqrt{3}\xi_{1}\rangle]u)e^{iS(u)}\\
& &-\nabla u([\nabla,\langle\sqrt{3}\xi_{1}\rangle]u)
(iS''(u)u-S'(u)^{2}u)e^{iS(u)},
\end{eqnarray*}
we obtain
\begin{eqnarray*}
\lefteqn{\||\nabla|^{s}(\langle\sqrt{3}\xi_{1}\rangle 
ue^{iS(u)})\|_{L^{2}}}\\
&\simeq&\||\nabla|^{s-2}\Delta(\langle\sqrt{3}\xi_{1}\rangle 
ue^{iS(u)})\|_{L^{2}}\\
&\le&
C(\||\nabla|^{s-2}
\{\Delta (\langle\sqrt{3}\xi_{1}\rangle u)e^{iS(u)}\}\|_{L^{2}}
+\||\nabla|^{s-2}\{\Delta (\langle\sqrt{3}\xi_{1}\rangle u) 
S'(u)ue^{iS(u)}\}\|_{L^{2}})\\
& &+C\||\nabla|^{s-2}\{\nabla u\cdot \nabla(\langle\sqrt{3}
\xi_{1}\rangle u)S'(u)e^{iS(u)}\}\|_{L^{2}}\\
& &+C\||\nabla|^{s-2}\{\nabla u\cdot \nabla(\langle\sqrt{3}
\xi_{1}\rangle u)S'(u)^{2}ue^{iS(u)}\}\|_{L^{2}}\\
& &+C\||\nabla|^{s-2}\{([\Delta,\langle\sqrt{3}\xi_{1}\rangle]u)
S'(u)ue^{iS(u)}\}\|_{L^{2}}\\
& &+C\||\nabla|^{s-2}\{\nabla u([\nabla,\langle\sqrt{3}\xi_{1}\rangle]u)
S''(u)ue^{iS(u)}\}\|_{L^{2}}\\
& &+C\||\nabla|^{s-2}\{\nabla u([\nabla,\langle\sqrt{3}\xi_{1}\rangle]u)
S'(u)^{2}ue^{iS(u)}\}\|_{L^{2}}.
\end{eqnarray*}
Hence by Lemma \ref{pp}, we have
\begin{eqnarray}
\lefteqn{\||\nabla|^{s}(\langle\sqrt{3}\xi_{1}\rangle ue^{iS(u)})\|_{L^{2}}}
\nonumber\\
&\le&
C(\||\nabla|^{s-2}e^{iS(u)}\|_{L^{\frac{2s}{s-2}}}
+\||\nabla|^{s-2}(S'(u)ue^{iS(u)})\|_{L^{\frac{2s}{s-2}}})
\nonumber\\
& &\qquad\times(\|\Delta (\langle\sqrt{3}\xi_{1}\rangle u)\|_{L^{s}}
+
\|[\Delta,\langle\sqrt{3}\xi_{1}\rangle]u\|_{L^{s}})\nonumber\\
& &+C(1+\|S'(u)u\|_{L^{\infty}})
(\|\langle\sqrt{3}\xi_{1}\rangle u\|_{\dot{H}^{s}}
+\|[\Delta,\langle\sqrt{3}\xi_{1}\rangle]u\|_{\dot{H}^{s-2}})
\nonumber\\
& &
+C\{\||\nabla|^{s-1}u\|_{L^{\frac{2s}{s-1}}}
(\|\nabla (\langle\sqrt{3}\xi_{1}\rangle u)\|_{L^{2s}}
+\|[\nabla,\langle\sqrt{3}\xi_{1}\rangle ]u)\|_{L^{2s}})\nonumber\\
& &
\qquad+\|\nabla u\|_{L^{2s}}
(\||\nabla|^{s-1} (\langle\sqrt{3}\xi_{1}\rangle u)\|_{L^{\frac{2s}{s-1}}}
+\||\nabla|^{s-2}[\nabla,\langle\sqrt{3}\xi_{1}\rangle]u)\|_{L^{\frac{2s}{s-1}}})\}
\nonumber\\
& &\qquad\qquad\times(\|S'(u)\|_{L^{\infty}}+\|S'(u)^{2}u\|_{L^{\infty}})
\nonumber\\
& &
+C\|\nabla u\|_{L^{2s}}
(\|\nabla(\langle\sqrt{3}\xi_{1}\rangle u)\|_{L^{2s}}
+
\|[\nabla,\langle\sqrt{3}\xi_{1}\rangle] u\|_{L^{2s}})
\nonumber\\
& &\qquad\times(\||\nabla|^{s-2}(S'(u)e^{iS(u)})\|_{L^{\frac{2s}{s-2}}}
+\||\nabla|^{s-2}(S'(u)^{2}ue^{iS(u)})\|_{L^{\frac{2s}{s-2}}}).
\label{c1}
\end{eqnarray}
Lemma \ref{ff} (i) and the interpolation inequality 
yield
\begin{eqnarray}
\||\nabla|^{s-2}e^{iS(u)}\|_{L^{\frac{2s}{s-2}}}
&\le&C\|u\|_{L^{\infty}}^{p-2}
\||\nabla|^{s-2}u
\|_{L^{\frac{2s}{s-2}}}
\nonumber\\
&\le&C\|u\|_{L^{\infty}}^{p-2+\frac{2}{s}}
\|u\|_{\dot{H}^{s}}^{1-\frac2s}
\nonumber\\
&\le&C\|\langle\sqrt{3}\xi_{1}\rangle u\|_{L^{\infty}}^{p-2+\frac{2}{s}}
\|\langle\sqrt{3}\xi_{1}\rangle u\|_{H^{s}}^{1-\frac2s}.
\label{c2}
\end{eqnarray}
Lemma \ref{ff} and (\ref{c2}) imply 
\begin{eqnarray}
\lefteqn{\||\nabla|^{s-1}(S'(u)ue^{iS(u)})\|_{L^{\frac{2s}{s-1}}}}
\nonumber\\
&\le&
C(\||\nabla|^{s-1}(S'(u)u)\|_{L^{\frac{2s}{s-1}}}
+\|S'(u)u\|_{L^{\infty}}
\||\nabla|^{s-1}e^{iS(u)}\|_{L^{\frac{2s}{s-1}}})
\nonumber\\
&\le&C(1+\|u\|_{L^{\infty}}^{p-1})
\|u\|_{L^{\infty}}^{p-2+\frac{1}{s}}
\|u\|_{\dot{H}^{s}}^{1-\frac1s}
\nonumber\\
&\le&C(1+\|\langle\sqrt{3}\xi_{1}\rangle u\|_{L^{\infty}}^{p-1})
\|\langle\sqrt{3}\xi_{1}\rangle u\|_{L^{\infty}}^{p-2+\frac{1}{s}}
\|\langle\sqrt{3}\xi_{1}\rangle u\|_{\dot{H}^{s}}^{1-\frac1s}.
\label{c3}
\end{eqnarray}
By the interpolation 
inequality, we obtain
\begin{eqnarray*}
\|\Delta (\langle\sqrt{3}\xi_{1}\rangle u)\|_{L^{s}}
&\le& C\|\langle\sqrt{3}\xi_{1}\rangle u\|_{L^{\infty}}^{1-\frac2s}
\|\langle\sqrt{3}\xi_{1}\rangle u\|_{\dot{H}^{s}}^{\frac2s},\\
\|\nabla (\langle\sqrt{3}\xi_{1}\rangle u)\|_{L^{2s}}
&\le& C\|\langle\sqrt{3}\xi_{1}\rangle u\|_{L^{\infty}}^{1-\frac1s}
\|\langle\sqrt{3}\xi_{1}\rangle u\|_{\dot{H}^{s}}^{\frac1s},\\
\||\nabla|^{s-1} (\langle\sqrt{3}\xi_{1}\rangle u)\|_{L^{\frac{2s}{s-1}}}
&\le& C\|\langle\sqrt{3}\xi_{1}\rangle u\|_{L^{\infty}}^{\frac1s}
\|\langle\sqrt{3}\xi_{1}\rangle u\|_{\dot{H}^{s}}^{1-\frac1s}.
\end{eqnarray*}
Since $[\Delta,\langle\sqrt{3}\xi_{1}\rangle]$ and 
$[\nabla,\langle\sqrt{3}\xi_{1}\rangle]$ are $1$st and $0$-th order 
pseudo-differential operators, by the $L^{p}$ boundness of 
pseudo-differential operator and the interpolation inequality, 
we obtain
\begin{eqnarray*}
\|[\Delta,\langle\sqrt{3}\xi_{1}\rangle] u\|_{L^{s}}
&\le& C\|\langle\nabla\rangle u\|_{L^{s}}
\le C\|\langle\Delta\rangle u\|_{L^{s}}\\
&\le& C\|\langle\sqrt{3}\xi_{1}\rangle u\|_{L^{\infty}}^{1-\frac2s}
\|\langle\sqrt{3}\xi_{1}\rangle u\|_{H^{s}}^{\frac2s},\\
\|[\Delta,\langle\sqrt{3}\xi_{1}\rangle]u\|_{\dot{H}^{s-2}}
&\le&C\|u\|_{H^{s-1}}\le C\|u\|_{H^{s}}\\
&\le&C\|\langle\sqrt{3}\xi_{1}\rangle u\|_{H^{s}},\\
\|[\nabla,\langle\sqrt{3}\xi_{1}\rangle ]u\|_{L^{2s}}
&\le&C\|u\|_{L^{2s}}\\
&\le&C\|\langle\sqrt{3}\xi_{1}\rangle u\|_{L^{\infty}}^{1-\frac1s}
\|\langle\sqrt{3}\xi_{1}\rangle u\|_{H^{s}}^{\frac1s},\\
\||\nabla|^{s-2} ([\nabla,\langle\sqrt{3}\xi_{1}\rangle] u)\|_{L^{\frac{2s}{s-1}}}
&\le&C\|\langle\nabla\rangle^{s-2}u\|_{L^{\frac{2s}{s-1}}}
\le C\|\langle\nabla\rangle^{s-1}u\|_{L^{\frac{2s}{s-1}}}\\
&\le&C\|\langle\sqrt{3}\xi_{1}\rangle u\|_{L^{\infty}}^{\frac1s}
\|\langle\sqrt{3}\xi_{1}\rangle u\|_{H^{s}}^{1-\frac1s},
\end{eqnarray*}
Combining the above inequalities with 
(\ref{c1}), (\ref{c2}), (\ref{c3}), 
we have
\begin{eqnarray*}
\||\nabla|^{s}(\langle\sqrt{3}\xi_{1}\rangle ue^{iS(u)})\|_{L^{2}}
\le C(1+\|\langle\sqrt{3}\xi_{1}\rangle u\|_{L^{\infty}}^{2p-2})
\|\langle\sqrt{3}\xi_{1}\rangle u\|_{H^{s}}.
\end{eqnarray*} 
This completes the proof of Lemma \ref{nl}. 
\end{proof}

\begin{proof}[Proof of Proposition \ref{nl1}.] 
The proof follows by applying Lemma \ref{nl} 
for $u=\langle\sqrt{3}\xi_{1}\rangle^{-1}\hat{\psi}_{+}$. 
\end{proof}


\section{Proof of Theorems \ref{nonlinear1} and \ref{nonlinear2}.} \label{sec:nonlinear}

In this section we prove Theorems \ref{nonlinear1} and \ref{nonlinear2}. 
Let $u_{+}$ be given by (\ref{u}). 
We first rewrite (\ref{FSP2}) as the integral equation. 
By Propositions \ref{linear} and \ref{nl1}, we have
\begin{eqnarray}
u(t)-u_+(t)=u(t)-W(t){{{{\mathcal F}}}}^{-1}w+R_1(t),\label{71}
\end{eqnarray}
where 
\begin{eqnarray}
w(t,\xi)=\hat{\psi}_+(\xi)e^{iS_+(t,\xi)} \label{72}
\end{eqnarray}
and $R_1$ satisfies 
\begin{eqnarray}
\|R_1(t)\|_{L^{\infty}(t,\infty;L_x^2)}&\le&
Ct^{-\beta}\|{{\mathcal F}}^{-1}w\|_{H^{0,s}}\nonumber\\
&\le&Ct^{-\beta}P(\|\psi_{+}\|_{H_{x}^{0,s}}),\label{78}\\
\|\langle\pt_{x_{1}}\rangle^{\frac{2}{qd}}R_1(t)\|_{L^q(t,\infty;L_x^{r})}&\le&
Ct^{-\frac{1}{q}-\beta}\|{{\mathcal F}}^{-1}w\|_{H^{0,s}}
\nonumber\\
&\le&
Ct^{-\frac{1}{q}-\beta}
P(\|\psi_{+}\|_{H_{x}^{0,s}})\label{781}
\end{eqnarray}
for $0<\beta<1/2$ and $s>d/2-(d-1)/r+1$, where $P$ is a polynomial 
without constant terms. Then (\ref{71}) is rewritten as follows:
\begin{eqnarray}
u(t)-u_+(t)=W(t){{{{\mathcal F}}}}^{-1}[{{{{\mathcal F}}}}W(-t)u-w]
+R_1(t).\label{73}
\end{eqnarray}
Let ${{\mathcal L}}=i\pt_t+(1/2)\Delta-(1/4)\pt_{x_{1}}^4$. 
From (\ref{FSP2}) and (\ref{72}), we obtain
\begin{eqnarray}
i\pt_t({{{{\mathcal F}}}}W(-t)u)&=&
{{{{\mathcal F}}}}W(-t){{{\mathcal L}}}u=\lambda
{{{{\mathcal F}}}}W(-t)|u|^{p-1}u,\label{74}\\
i\pt_tw&=&
\lambda
\frac{t^{-\frac{p-1}{2}d}}
{(3\xi_{1}^{2}+1)^{\frac{p-1}{2}}}|\hat{\psi}_{+}(\xi)|^{p-1}
\hat{\psi}_+(\xi)e^{iS_{+}(t,\xi)}.\label{75}
\end{eqnarray}
Subtracting (\ref{75}) from (\ref{74}), we have
\begin{eqnarray}
\lefteqn{i\pt_t({{{{\mathcal F}}}}W(-t)u-w)}\nonumber\\
&=&\lambda{{{{\mathcal F}}}}W(-t)|u|^{p-1}u
-\lambda
\frac{t^{-\frac{p-1}{2}d}}
{(3\xi_{1}^{2}+1)^{\frac{p-1}{2}}}|\hat{\psi}_{+}(\xi)|^{p-1}
\hat{\psi}_+(\xi)e^{iS_{+}(t,\xi)}.\label{76}
\end{eqnarray}
Proposition \ref{linear} yields 
\begin{eqnarray}
\lefteqn{
W(t){{{\mathcal F}}}^{-1}
[\lambda
\frac{t^{-\frac{p-1}{2}d}}
{(3\xi_{1}^{2}+1)^{\frac{p-1}{2}}}|\hat{\psi}_{+}(\xi)|^{p-1}
\hat{\psi}_+(\xi)e^{iS_{+}(t,\xi)}]}\qquad\qquad\qquad\qquad\qquad{}\nonumber
\\
&=&\lambda|u_+|^{p-1}u_++R_2(t),
\label{77}
\end{eqnarray}
where $R_2$ satisfies 
\begin{eqnarray}
\|R_2\|_{L^1(t,\infty;L_x^2)}\le
Ct^{-\beta}P(\|\psi_{+}\|_{H_{x}^{0,s}}),\label{79}
\end{eqnarray}
where $0<\beta<1/2$. 
Substituting (\ref{77}) into (\ref{76}), we obtain 
\begin{eqnarray*}
i\pt_t({{{{\mathcal F}}}}W(-t)u-w)
=\lambda{{{{\mathcal F}}}}W(-t)[|u|^{p-1}u-|u_+|^{p-1}u_+]
-{{{{\mathcal F}}}}W(-t)R_2.
\end{eqnarray*}
Integrating the above equation with respect to $t$ variable on $(t,\infty)$,
we have
\begin{eqnarray}
\lefteqn{u(t)-
W(t){{{\mathcal F}}}^{-1}w}\nonumber\\
&=&i\lambda\int_t^{+\infty}
W(t-\tau)[|u|^{p-1}u-|u_+|^{p-1}u_+](\tau)d\tau\nonumber\\
& &-i\int_t^{+\infty}W(t-\tau)R_2(\tau)d\tau.\label{80}
\end{eqnarray}
Combining (\ref{71}) with (\ref{80}), we obtain the following 
integral equation:
\begin{eqnarray}
u(t)-u_+(t)&=&
i\lambda\int_t^{+\infty}
W(t-\tau)[|u|^{p-1}u-|u_+|^{p-1}u_+](\tau)d\tau\nonumber\\
& &+R_1(t)-i\int_t^{+\infty}W(t-\tau)R_2(\tau)d\tau.\label{81}
\end{eqnarray}

We prove the existence of a solution to (\ref{81}). We first consider the case $d=3$. 
Notice that in this case the end point Strichartz estimate is available.

\vskip1mm
\noindent
{\bf Case: $d=3$.} To show the existence of $u$ satisfying (\ref{81}), we 
shall prove that the map
\begin{eqnarray}
\ \ \ \Phi[u](t)&=&u_+(t)+
i\lambda\int_t^{+\infty}
W(t-\tau)[|u|^{p-1}u-|u_+|^{p-1}u_+](\tau)d\tau
\nonumber\\
& &+R_1(t)-i\int_t^{+\infty}W(t-\tau)R_2(\tau)d\tau
\label{phi}
\end{eqnarray}
is a contraction on
\begin{eqnarray*}
{{{\bf X}}}_{\rho,T}&=&
\{u\in C([T,\infty);L^2(\rre^{3}))\cap \langle\pt_{x_{1}}
\rangle^{-\frac{2}{3q}}L_{loc}^{q}(T,\infty;L^{r}(\rre^{3}));\\
& &\qquad\qquad\qquad\qquad\qquad\qquad\qquad\qquad\qquad
\|u-u_+\|_{{{\bf X}}_T}\le \rho\},\\
\|v\|_{{{\bf X}}_T}
&=&\sup_{t\ge T}
t^{\alpha}(\|v\|_{L^{\infty}(t,\infty;L_x^2)}
+\|\langle\pt_{x_{1}}\rangle^{\frac{2}{3q}}v\|_{L^{q}(t,\infty;L_x^{r})})
\end{eqnarray*}
for some $T\ge3$, where 
\begin{eqnarray*}
\left(q,r\right)
=\left(\frac{4}{3p-5},\frac{6}{8-3p}\right). 
\end{eqnarray*}

Let $v(t)=u(t)-u_+(t)$ and 
$v\in{{\bf X}}_{\rho,T}$. Then
\begin{eqnarray*}
\lefteqn{\Phi[u](t)-u_+(t)}\\
&=&
i\lambda\int_t^{+\infty}
W(t-\tau)
[(|v+u_{+}|^{p-1}(v+u_{+})-|u_{+}|^{p-1}u_{+})](\tau)d\tau\\
& &+R_1(t)-i\int_t^{+\infty}W(t-\tau)R_2(\tau)d\tau.
\end{eqnarray*}
Since 
\begin{eqnarray*}
\lefteqn{||v+u_{+}|^{p-1}(v+u_{+})-|u_{+}|^{p-1}u_{+}|}\\
&\le& p\int_{0}^{1}||\theta v+u_{+}|^{p-1}-|u_{+}|^{p-1}|d\theta|v|+
p|u_{+}|^{p-1}|v|\\
&\le& C(|v|^{p-1}+|u_{+}|^{p-1})|v|,
\end{eqnarray*}
the Strichartz estimate (Lemma \ref{S}) implies
\begin{eqnarray}
\lefteqn{\|\Phi[u]-u_+
\|_{L^{\infty}(t,\infty;L_x^2)}
+\|\langle\pt_{x_{1}}\rangle^{\frac{2}{3q}}(\Phi[u]-u_+)\|_{L^q(t,\infty;L_x^{r})}}\nonumber\\
&\le&C(\||v|^{p-1}v\|_{L^{2}(t,\infty;L_x^{\frac65})}+\||u_{+}|^{p-1}v\|_{L^1(t,\infty;L_x^2)}\nonumber\\
& &+\|R_1\|_{L^{\infty}(t,\infty;L_x^2)}
+\|\langle\pt_{x_{1}}\rangle^{\frac{2}{3q}}R_1\|_{L^q(t,\infty;L_x^{r})}+
\|R_2\|_{L^1(t,\infty;L_x^2)}).\nonumber\\
\label{82}
\end{eqnarray}
By the H\"{o}lder inequality, we find
\begin{eqnarray*}
\||v|^{p-1}v\|_{L^{2}(t,\infty;L_x^{\frac65})}
&\le&C\|\|v\|_{L_x^{2}}^{p-1}\|v\|_{L_x^{r}}\|_{L^{2}(t,\infty)}
\nonumber\\
&\le&C\rho^{p-1}\|t^{-(p-1)\alpha}
\|v\|_{L_x^{r}}\|_{L^{2}(t,\infty)}\nonumber\\
&\le&C\rho^{p-1}\|t^{-(p-1)\alpha}\|_{L^{s}(t,\infty)}
\|v\|_{L^{q}((t,\infty);L^{r})}\nonumber\\
&\le& C\rho^p t^{-\alpha p+1-\frac{3}{4}(p-1)},\nonumber\\
\||u_{+}|^{p-1}v\|_{L^1(t,\infty;L_x^2)}
&\le&\|\|u_+\|_{L_x^{\infty}}^{p-1}\|v\|_{L_x^2}\|_{L^1(t,\infty)}\\
&\le& C\rho\|t^{-\frac{3}{2}(p-1)-\alpha}\|_{L^1(t,\infty)}\\
&\le& C\rho t^{-\frac{3}{2}(p-1)-\alpha+1},
\end{eqnarray*}
where $1/s=(7-3p)/4$. 
Substituting above two inequalities, (\ref{78}), 
(\ref{781}) and (\ref{79}) 
into (\ref{82}), we have
\begin{eqnarray*}
\|\Phi[u]-u_+\|_{{{\bf X}}_T}
\le C(\rho^{p}T^{-\alpha(p-1)+1-\frac{3}{4}(p-1)}+\rho T^{-\frac{3}{2}(p-1)+1}
+T^{\alpha-\beta}).
\end{eqnarray*}
Choosing $1/(p-1)-3/4<\alpha<\beta<1/2$ and  
$T$ large enough, 
we guarantee that $\Phi$ is a map onto ${{\bf X}}_{\rho,T}$. 
In a similar way, we can conclude that $\Phi$ is a contraction 
map on ${{\bf X}}_{\rho,T}$. Therefore, by 
Banach fixed point theorem 
one infers  that $\Phi$ has a unique fixed point in ${{\bf X}}_{\rho,T}$ 
which is the solution to the final state problem (\ref{FSP2}). 

Next, we show that the solution to (\ref{FSP2}) with   
finite ${{\bf X}}_T$ norm is unique. Let 
$u_{1}$ and $u_{2}$ be two solutions satisfying 
$\|u_{1}\|_{{{\bf X}}_T}<\infty$ and $\|u_{2}\|_{{{\bf X}}_T}<\infty$. 
We put $t_1=\inf\{t\in[T,\infty);u_{1}(s)=u_{2}(s)$ for any $s\in[t,\infty)\}$ 
and $\rho=\max\{\|u_{1}\|_{{{\bf X}}_T},\|u_{2}\|_{{{\bf X}}_T}\}$. 
If $t_1=T$, then $u_{1}(t)=u_{2}(t)$ on $[T,\infty)$ which is desired 
result. If $T<t_1$, as in (\ref{82}) 
by the Strichartz inequality (Lemma \ref{S}), 
we have
\begin{eqnarray*}
\lefteqn{\|\langle\pt_{x_{1}}\rangle^{\frac{2}{3q}}(u_{1}-u_{2})\|_{L^q(t_0,t_1,L_x^r)}}\\
&\le&C\rho^{p-1}(t_0^{1-s(p-1)\alpha}-t_1^{1-s(p-1)\alpha})^{1/s}
\|\langle\pt_{x_{1}}\rangle^{\frac{2}{3q}}(u_{1}-u_{2})\|_{L^q(t_0,t_1,L_x^r)},
\end{eqnarray*}
for $t_0\in[T,t_1)$. Since $1-s(p-1)\alpha<0$, 
we can choose $t_0\in[T,t_1)$ so that 
$C\rho^{p-1}(t_0^{1-s(p-1)\alpha}-t_1^{1-s(p-1)\alpha})^{1/s}<1$. Then
$\|\langle\pt_{x_{1}}\rangle^{\frac{2}{3q}}(u_{1}-u_{2})
\|_{L^q(t_0,t_1,L_x^r)}\le0$
which implies that $u_{1}(t)\equiv u_{2}(t)$ on 
$[t_0,t_1]$. This contradicts the assumption 
of $t_1$. Hence $u_{1}(t)=u_{2}(t)$ on $[T,\infty)$.

From (\ref{FSP2}), 
we obtain
\begin{eqnarray}
u(t)=W(t-T)u(T)-i\lambda\int_T^tW(\tau)|u|^{p-1}u(\tau)d\tau.
\label{5.19}
\end{eqnarray}
Since $u(T)\in L_x^{2}(\rre^{3})$, 
combining the argument by \cite{Tsutsumi} 
with the Strichartz estimate (Lemma \ref{S}) 
and $L^2$ conservation law for (\ref{5.19}), 
we can prove that (\ref{5.19}) has a unique 
global solution in $C(\rre;L_x^2(\rre^{3}))\cap
\langle\pt_{x_{1}}\rangle^{-2/(3q)}L_{loc}^q(\rre;L_x^{r}(\rre^{3}))$. 
Therefore the solution $u$ of (\ref{FSP2}) can be extended to all times. 

Finally we show that the solution to (\ref{FSP2}) converges to 
$W(t)\psi_{+}$ in $L^{2}$ as $t\to\infty$. 
Let 
\begin{eqnarray*}
u_{+}^{0}(t,x)=\frac{t^{-\frac{3}{2}}}{\sqrt{3\xi_{1}^{2}+1}}\hat{\psi}(\mu)
e^{\frac34it\mu_{1}^{4}+\frac12it|\mu|^{2}-i\frac34\pi},
\end{eqnarray*}
where $\mu$ is given by (\ref{mu}). 
Then by $u-u_{+}\in{{\bf X}}_T$ and Proposition \ref{linear}, 
we have
\begin{eqnarray*}
\lefteqn{\|u(t)-W(t)\psi_{+}\|_{L_{x}^{2}}}\\
&\le&
\|u(t)-u_{+}\|_{L_{x}^{2}}
+\|u_{+}-u_{+}^{0}\|_{L_{x}^{2}}
+\|u_{+}^{0}-W(t)\psi_{+}\|_{L_{x}^{2}}\\
&\le&Ct^{-\alpha}+\|S_{+}(t,\mu)\|_{L_{x}^{\infty}}\|u_{+}\|_{L_{x}^{2}}
+Ct^{-\beta}P(\|\psi_{+}\|_{H^{0,s}})\\
&\le&Ct^{-\alpha}+Ct^{1-\frac32(p-1)}P(\|\psi_{+}\|_{H^{0,s}})
+Ct^{-\beta}\|\psi_{+}\|_{H^{0,s}}\\
&\le&t^{-\gamma}P(\|\psi_{+}\|_{H^{0,s}})
\end{eqnarray*}
where $1/(p-1)-3/4<\alpha<1/2$, $3/8<\beta<1/2$ 
and $\min\{1/(p-1)-3/4,1-3(p-1)/2,3/8\}<\gamma$. 
This completes the proof of Theorem \ref{nonlinear2}. 

\vskip1mm
\noindent
{\bf Case: $d=2$.} Next we show Theorem \ref{nonlinear1}. 
In this case the end point Strichartz estimate is not available. 
Instead, we use the admissible pair which is close to the end point. 

To show the existence of $u$ satisfying (\ref{81}), we 
shall prove that the map $\Phi$ given by (\ref{phi}) 
is a contraction on
\begin{eqnarray*}
{{{\bf X}}}_{\rho,T}&=&
\{u\in C([T,\infty);L^2(\rre^{2}))\cap \langle\pt_{x_{1}}
\rangle^{-\frac{1}{q}}L_{loc}^{q}(T,\infty;L^{r}(\rre^{2}));\\
& &\qquad\qquad\qquad\qquad\qquad\qquad\qquad\qquad\qquad
\|u-u_+\|_{{{\bf X}}_T}\le \rho\},\\
\|v\|_{{{\bf X}}_T}
&=&\sup_{t\ge T}
t^{\alpha}(\|v\|_{L^{\infty}(t,\infty;L_x^2)}
+\|\langle\pt_{x_{1}}\rangle^{\frac{1}{q}}v\|_{L^{q}(t,\infty;L_x^{r})})
\end{eqnarray*}
for some $T\ge3$, where 
\begin{eqnarray*}
\left(q,r\right)
=\left(\frac{2}{p-2+2\varepsilon},\frac{2}{3-p-2\varepsilon}\right). 
\end{eqnarray*}

Let $v(t)=u(t)-u_+(t)$ and $v\in{{\bf X}}_{\rho,T}$. 
Then 
the Strichartz estimate (Lemma \ref{S}) implies
\begin{eqnarray}
\lefteqn{\|\Phi[u]-u_+
\|_{L^{\infty}(t,\infty;L_x^2)}
+\|\langle\pt_{x_{1}}\rangle^{\frac{1}{q}}(\Phi[u]-u_+)\|_{L^q(t,\infty;L_x^{r})}}\nonumber\\
&\le&C(\||v|^{p-1}v\|_{L^{\frac{2}{1+2\varepsilon}}(t,\infty;L_x^{\frac{1}{1-\varepsilon}})}
+\||u_{+}|^{p-1}v\|_{L^1(t,\infty;L_x^2)})\nonumber\\
& &+C(\|R_1\|_{L^{\infty}(t,\infty;L_x^2)}
+\|\langle\pt_{x_{1}}\rangle^{\frac{1}{q}}R_1\|_{L^q(t,\infty;L_x^{r})}+
\|R_2\|_{L^1(t,\infty;L_x^2)}).\nonumber\\
\label{821}
\end{eqnarray}
By the H\"{o}lder inequality, 
\begin{eqnarray*}
\||v|^{p-1}v\|_{L^{\frac{2}{1+2\varepsilon}}(t,\infty;L_x^{\frac{1}{1-\varepsilon}})}
&\le&C\|\|v\|_{L_x^{2}}^{p-1}\|v\|_{L_x^{r}}\|_{L^{\frac{2}{1+2\varepsilon}}(t,\infty)}
\nonumber\\
&\le&C\rho^{p-1}\|t^{-(p-1)\alpha}
\|v\|_{L_x^{r}}\|_{L^{\frac{2}{1+2\varepsilon}}(t,\infty)}\nonumber\\
&\le&C\rho^{p-1}\|t^{-(p-1)\alpha}\|_{L^{s}(t,\infty)}
\|v\|_{L^{q}((t,\infty);L^{r})}\nonumber\\
&\le& C\rho^p t^{-\alpha p+1-\frac{1}{2}(p-1)},\nonumber\\
\||u_{+}|^{p-1}v\|_{L^1(t,\infty;L_x^2)}
&\le&\|\|u_+\|_{L_x^{\infty}}^{p-1}\|v\|_{L_x^2}\|_{L^1(t,\infty)}\\
&\le& C\rho\|t^{-(p-1)-\alpha}\|_{L^1(t,\infty)}\\
&\le& C\rho t^{-(p-1)-\alpha+1},
\end{eqnarray*}
where $1/s=(3-p)/2$. 
Substituting above two inequalities, (\ref{78}), 
(\ref{781}) and (\ref{79}) 
into (\ref{821}), we have
\begin{eqnarray*}
\|\Phi[u]-u_+\|_{{{\bf X}}_T}
\le C(\rho^{p}T^{-\alpha(p-1)+1-\frac{1}{2}(p-1)}+\rho T^{-(p-1)+1}
+T^{\alpha-\beta}).
\end{eqnarray*}
Choosing $1/(p-1)-1/2<\alpha<\beta<1/2$ and  
$T$ large enough, 
we guarantee that $\Phi$ is a map onto ${{\bf X}}_{\rho,T}$. 
In a similar way we can conclude that $\Phi$ is a contraction 
map on ${{\bf X}}_{\rho,T}$. Therefore, by Banach fixed point theorem 
one finds  that $\Phi$ has a unique fixed point in ${{\bf X}}_{\rho,T}$ 
which is the solution to the final state problem (\ref{FSP2}). 
Since the rest part of the proof is same as the proof of 
Theorem \ref{nonlinear2}, we omit the detail. 
This completes the proof of Theorem \ref{nonlinear1}. $\qed$

\section{Final comments}

Finally, we give several comments.  

\vskip2mm
\noindent
1. As mentioned in the introduction, we restricted our study of (\ref{D}) 
to the case where  $\alpha,\beta$ and $\gamma$ satisfy $(\alpha+\frac{3\beta^{2}}{8\gamma})\alpha>0$ 
and $\alpha\gamma<0.$ If those conditions are violated, one should replace (\ref{4NLS}) by the "non-elliptic" NLS
\begin{eqnarray}\label{nonell}
i\partial_tu +\frac{1}{2}\Delta_\perp-\frac{1}{2}\partial_{x_1}^2 u\pm\frac{1}{4}\partial_{x_1}^4u=\lambda| u|^{p-1}u\qquad t>0,\ x\in\rre^{d}.
\end{eqnarray}
It is very likely that similar scattering results hold  true in this case since the linear estimates should be essentially the same.

\vskip2mm
\noindent
2. We were concerned in this paper with scattering issues for equation (1.1). On a different regime, one might look for a possible blow-up of solutions in the focusing case ($\lambda <0$ in (1.2)). It is conjectured in \cite{FIS} that a finite time blow-up should arise when 
$p\geq 1+8/(2d-1).$ Proving this fact is an interesting open question.

\vskip3mm
\noindent {\bf Acknowledgments.} 
The part of this work was done while J.S was 
visiting at Department of Mathematics at 
Universit\'e de Paris-Sud, Orsay 
whose hospitality they gratefully acknowledge.
J.S. is partially supported by JSPS, Strategic Young Researcher Overseas
Visits Program for Accelerating Brain Circulation and by JSPS,
Grant-in-Aid for Scientific Research (B) 17H02851.

\end{document}